\numberwithin{equation}{section}
\newtheorem{thrm}{Theorem}[section]
\newtheorem{lemma}[thrm]{Lemma}
\newtheorem{prop}[thrm]{Proposition}
\newtheorem{cor}[thrm]{Corollary}
\newtheorem{dfn}[thrm]{Definition}
\newtheorem{rmrk}[thrm]{Remark}
\def\G{{\nabla}}
\def\g{{\tilde g}}
\def\e{{\tilde\eta}}
\def\o{{\tilde\omega}}
\def\n{{\nabla^{ngt}}}
\def\bn{{\bar\nabla}}
\def\ne{{\nabla_1^{ngt}}}
\def\nd{{\nabla_2^{ngt}}}
\def\nt{{\nabla_t^{ngt}}}
\def\O{{\tilde\Omega}}
\begin{document}

\begin{abstract}
We show that  a connection
with skew-symmetric torsion satisfying the Einstein metricity
condition exists on an almost contact metric manifold exactly when it is D-homothetic to a cosymplectic manifold. In  dimension five, we get that the existence of a connection with skew torsion  satisfying the Einstein metricity
condition is equivalent to the existence of a Sasaki-Einstein 5-manifold and vice versa, any Sasaki-Einstein 5-manifold generates a two parametric family of connections with skew torsion
 satisfying the Einstein metricity
condition. Formulas for the curvature and the Ricci tensors of these  connections are presented in terms of the Sasaki-Einstein SU(2) structures.
\end{abstract}

\title[Non-symmetric  Riemannian gravity and Sasaki-Einstein 5-manifolds]{Non-symmetric
Riemannian gravity  and Sasaki-Einstein 5-manifolds}
\date{\today}
\author{Stefan Ivanov}
\address[Ivanov]{University of Sofia "St. Kl. Ohridski"\\
Faculty of Mathematics and Informatics\\
Blvd. James Bourchier 5\\
1164 Sofia, Bulgaria; Institute of Mathematics and Informatics,
Bulgarian Academy of Sciences}
 \email{ivanovsp@fmi.uni-sofia.bg}
\author{Milan Zlatanovi\'c}
\address[Zlatanovi\'c]{Department of Mathematics, Faculty of Science and Mathematics, University of Ni\v s, Vi\v segradska 33, 18000 Ni\v s, Serbia}
\email{zlatmilan@yahoo.com}

\date{\today }
\maketitle \tableofcontents


\setcounter{tocdepth}{2}

\section{Introduction}

In this note we consider the geometry arising on an odd-dimensional manifold from the Einstein  gravitational theory on a non-symmetric (generalized) Riemannian manifold
 $(M^{2n+1}, G=g+F)$, where the generalized metric $G$ has non-degenerate symmetric part $g$ and a skew-symmetric part $F$ of $rank=2n$.

\subsection{Motivation from general relativity}

General relativity (GR) was developed by  Albert
Einstein in 1916 \cite{Ein1}  and contributed to by
many others after 1916. In GR the equation $ds^2=g_{ij}dx^idx^j,
\quad (g_{ij}=g_{ji})$
  is valid, where
$g_{ij}$ are functions of a point. In GR, which is a four dimensional space-time continuum, metric
properties depend on mass distribution.
 The magnitudes $g_{ij}$  are known as \emph{gravitational
 potential}.
Christoffel symbols,  commonly expressed by $\Gamma^k_{ij}$, play
the role of  magnitudes which determine the
gravitational force field. General relativity explains gravity as
{ the} curvature of space-time.

In  GR the metric tensor {  obeys} the Einstein equations
$R_{ij}-\frac 12 R g_{ij}=T_{ij}$,
where $R_{ij}$ is the Ricci tensor of the metric of
space-time, $R$ is the scalar curvature of the metric, and
$T_{ij}$ is the energy-momentum tensor of matter. In 1922,
Friedmann \cite{Fried} found a solution in which the universe may
expand or contract, and later  Lema\^{\i}tre \cite{Lema} derived a
solution for an expanding universe. However, Einstein believed
that the universe was apparently static, and since  static
cosmology was not supported by the general relativistic field
equations, he added the cosmological constant
$\Lambda$ to the field equations, which became
$R_{ij}-\frac 12 R g_{ij}+\Lambda g_{ij} =T_{ij}$.
From 1923 to the end of his life Einstein worked on
various variants of Unified Field Theory \cite{Ein}. This theory had the aim to
unite {the theory of gravitation}
 and the theory of electromagnetism.

Starting from 1950,
Einstein used the real non-symmetric basic tensor
$G$, sometimes called \emph{generalized Riemannian
metric/manifold}. In this theory  the symmetric part $g_{ij}$ of
the basic tensor $G_{ij} (G_{ij}=g_{ij}+F_{ij})$ is related to
gravitation, and the skew-symmetric one $F_{ij}$ to
electromagnetism.

 More recently the idea of a non-symmetric metric tensor appears in
Moffat's non-symmetric gravitational theory \cite{Mof}.

In Moffat's theory the skew-symmetric part of the metric tensor
represents a Proca field (massive Maxwell field) which is a part
of the gravitational interaction, contributing to the rotation of
galaxies.

While on a Riemannian space  the connection coefficients are
expressed { as functions} of the metric, $g_{ij}$, in
Einstein's works the connection between these magnitudes is
determined by the so called \emph{Einstein metricity
condition,} i.e. the non-symmetric metric tensor $G$ and the
connection components $\Gamma_{ij}^k$ are connected by
\begin{equation}\label{metein}
\frac{\partial G_{ij}}{\partial x^m}-\Gamma ^p_{im}G_{pj}- \Gamma
^p_{mj}G_{ip}=0.
\end{equation}
A generalized Riemannian manifold satisfying the Einstein
metricity condition \eqref{metein} is  called an NGT-space
\cite{Ein,LMof,Mof}.

The choice of a connection in NGT is  uniquely determined in terms of the structure tensors \cite{IZl}.
Special attention was paid  when the
torsion of the NGT connection  is totally skew-symmetric with respect to the symmetric
part $g$ of $G$ \cite{Eis} (NGTS connection for short). One reason for that comes from  supersymmetric
string theories and non-linear $\sigma$-models (see e.g.
\cite{Str,y4,BSethi,GMPW,GPap} and
references therein), as well as from { the theory of gravity} itself
\cite{Ham}.

In even dimensions with nondegenerate skew-symmetric part $F$ one arrives { at}  Nearly K\"ahler manifolds, namely,  an almost Hermitian manifold
is NGTS  exactly when it is a Nearly
K\"ahler manifold \cite[Theorem~3.3]{IZl}. In this case the NGTS connection coincides with the
Gray connection \cite{Gr1,Gr2,Gr3}, which is the
unique connection with skew-symmetric torsion preserving the
Nearly K\"ahler structure \cite{FI}.  Nearly K\"ahler manifolds
(called almost Tachibana spaces in \cite{Yano}) were developed by
 Gray \cite{Gr1,Gr2,Gr3} and have been intensively
studied since then in \cite{Ki,N1,MNS,N2,But,FIMU}. Nearly
K\"ahler manifolds also appear in supersymmetric
string theories (see e.g. \cite{Po1,PS,LNP,GLNP}). The first
complete and therefore compact  inhomogeneous examples of 6-dimensional Nearly K\"ahler
manifolds were presented recently in \cite{FosH}.

In  odd dimensions $2n+1$ with $rank F=2n$ one gets an almost contact metric manifold. It is shown in \cite{IZl} that  a connection
with skew-symmetric torsion satisfying the Einstein metricity
condition exists on an almost contact metric manifold exactly when it is  \emph{almost nearly cosymplectic} (the precise definition is given in Definition~\ref{defanc} below).

The aim of this note is to investigate the geometry of almost-nearly cosymplectic spaces. It is well known that nearly cosymplectic manifolds are the odd dimensional analog of Nearly K\"ahler spaces and that the trivial circle bundle over a Nearly K\"ahler space is nearly cosymplectic.

We { establish} a   relation between almost nearly cosymplectic spaces { and}  nearly cosymplectic spaces, namely we present a special D-homothetic deformation relating these two objets. Applying the structure theorem for nearly cosymplectic structures established by de Nicola-Dileo-Yudin in \cite{NDY} we present a structure theorem for almost nearly cosymplectic structures (Theorem~\ref{ancstructure}) and give a formula relating the curvature and the Ricci tensors of the almost nearly cosymplectic structure { to} the corresponding D-homothetic nearly cosymplectic structure.

In  dimension five we found a closed relation { between} almost nearly cosymplectic structures {  and} the 5-dimensional Sasaki-Einstein spaces. Namely, using  the fundamental observation connecting nearly cosymplectic 5-manifolds with the SU(2) structures and Sasaki-Einstein 5-manifolds due to Cappelletti-Montano-Dileo in \cite{CD}, we show in Theorem~\ref{asasncos} that any almost nearly cosymplectic 5-manifold is D-homothetically equivalent to a 5-dimensional Sasaki-Einstein space. Moreover, we show that a Sasaki-Einstein 5-manifold generates a two parametric family of almost nearly cosymplectic structures and therefore a two-parametric family of NGTS connections.
We give an explicit formula connecting these NGTS connections  with the Levi-Civita connection of the corresponding Sasaki-Einstein structure.  We express the curvature and Ricci tensor of these NGTS connections in terms of the curvature of the Sasaki-Einstein metric and the corresponding Sasaki-Einstein SU(2) structure.

On the other hand, by virtue of admitting real Killing spinors \cite{FK}, Sasakian-Einstein 5-manifolds admit supersymmetry and
have received a lot of attention in physics from the point of view of AdS/CFT correspondence (see e.g. \cite{GauSas,GMPW1,Sas51,BBC,BBC1,Sp}). The AdS/CFT correspondence relates
string theory on the product of anti-deSitter space with a compact Einstein space to quantum
field theory on the conformal boundary.
The renewed interest in these manifolds has to do with the so-called p-brane
solutions in superstring theory.  For example, the case of
D3-branes of string theory the relevant near horizon geometry is
that of a product of anti-deSitter space with  a Sasakian-Einstein 5-manifold. This led to a construction of a number of examples of  irregular compact Sasaki-Einstein 5-manifolds \cite{GauSas,GMPW1,Sp}.

 In this spirit, our results may help to establish a possible  relation between the NGT with skew-symmetric torsion,  supersymmetric string theories and quantum field theory.

\section{Preliminaries}

\subsection{Einstein metricity condition (NGT)}\label{ngt}
In his attempt to construct a unified field theory
(Non-symmetric Gravitational Theory, briefly NGT)  Einstein
\cite{Ein} considered a generalized Riemannian manifold $(G=g+F)$ with nondegenerate symmetric part $g$ and skew-symmetric part $F$ and
used the so-called metricity condition
\eqref{metein}, which can be written as follows $
XG(Y,Z)-G(\n _YX,Z)-G(Y,\n_XZ)=0$.

For { the} non-degenerate  symmetric part $g$  we have a (1,1) tensor $A$ given  by
$F(X,Y)=g(AX,Y)$.

The metricity condition
\eqref{metein} can be written in terms of the torsion $T(X,Y)=\n_XY-\n_YX-[X,Y]$ of the NGT connection $\n$  and the endomorphism $A$ in the form
\begin{equation}\label{metein1}
(\n_XG)(Y,Z)=-G(T(X,Y),Z) \quad \Leftrightarrow \quad
(\n_X(g+F))(Y,Z)=-T(X,Y,Z)+T(X,Y,AZ).
\end{equation}
A general solution for the connection $\n$ satisfying
\eqref{metein} is given in  terms of   $g,F,T$\cite{Hlav} (see
also \cite{Mof}) and in terms of $dF$ in \cite{IZl}.

\subsection{Almost contact metric and almost nearly cosymplectic structures.}
In the case of  odd dimension, we consider an
almost contact metric manifold $(M^{2n+1}, g, \phi, \eta,\xi)$, i.e.,
a $(2n+1)$ -dimensional Riemannian manifold equipped with a 1-form
$\eta$, a (1,1)-tensor $\phi$ and { the} vector field $\xi$ dual to $\eta$
with respect to the metric $g$, $\eta(\xi)=1, \eta(X)=g(X,\xi)$
such that the following compatibility conditions are satisfied
(see e.g. \cite{Blair})
\begin{equation}\label{acon}
\phi^2=-id+\eta\otimes\xi, \quad g(\phi X,\phi Y)=g(X,Y)-\eta(X)\eta(Y), \quad \phi\xi=\eta\phi=0.
\end{equation}
The fundamental 2-form is defined by $F(X,Y)=g(\phi X, Y)$.

Such a space can be considered as a generalized Riemannian manifold with $G=g+F,\quad A=\phi$ and in this case the skew-symmetric part $F$ is degenerate $F(\xi,.)=0$ and has $rank F=2n$.

\begin{dfn}\cite{IZl}\label{defanc}
An almost contact metric manifold $(M^{2n+1},\phi,g,\eta,\xi)$ is
said to be \emph{almost-nearly cosymplectic} if the
covariant derivative of the fundamental tensor $\phi$ with respect to the Levi-Civita connection $\nabla$ of the Riemannian metric $g$ satisfies the
following   condition
\begin{equation}\label{ff3f}
\begin{split}
g((\nabla_X\phi)Y,Z)=\frac13dF(X,Y,Z)+\frac13\eta(X)d\eta(Y,\phi Z)-\frac16\eta(Y)d\eta(Z,\phi X)-\frac16\eta(Z)d\eta(\phi X,Y)
\end{split}
\end{equation}
\end{dfn}

The following relations are also valid \cite[Section~3.5]{IZl}
\begin{equation}\label{ax}
\begin{split}
d\eta(X,Z)=dF(X,\phi Z,\xi)=dF(\phi X,Z,\xi),\quad d\eta(X,\xi)=0,\\
d\eta(\phi X,Z)=d\eta(X,\phi Z)=dF(\phi X,\phi Z,\xi)=-d F(X,Z,\xi)\\
d F(\phi X,\phi Y,\phi Z)+dF(X,Y,\phi Z)=\eta(X)d\eta(Y,Z)+\eta(Y)\eta(Z,X).
\end{split}
\end{equation}
and the vector field $\xi$ is a Killing vector field
\begin{equation}\label{kill1}
(\nabla_X\eta)Y=g(\nabla_X\xi,Y)=\frac12d\eta(X,Y),
\end{equation}
Almost nearly cosymplectic manifolds arise in a natural way from NGT.
 Namely, we have the following

\begin{thrm}\cite[Theorem~3.8]{IZl}\label{acnika}
Let $(M,\phi,g,F,\eta,\xi)$ be an almost contact metric manifold with
a fundamental 2-form F considered as a generalized Riemannian
manifold $(M,G)$ with a generalized Riemannian metric $G=g+F$.
Then $(M,G)$ satisfies the Einstein metricity condition
\eqref{metein} with a totally skew-symmetric torsion $T$ if and
only if it is almost-nearly cosymplectic, i.e. \eqref{ff3f}
holds.

The skew-symmetric torsion is determined by the condition
\begin{equation}\label{nk3ac}
T(X,Y,Z)=-\frac13dF(X,Y,Z)
\end{equation}
The connection is  { uniquely} determined by the formula
\begin{equation}\label{ngtcon}
g(\n_XY,Z)=g(\nabla_XY,Z)-\frac16dF(X,Y,Z)+\frac16\Big[\eta(X)d\eta(Y,Z)+\eta(Y)d\eta(X,Z)
\Big].
\end{equation}
The Einstein metricity condition has the form
$$(\n_XG)(Y,Z)=\frac13\Big[dF(X,Y,Z)-dF(X,Y,\phi Z)\Big].$$The
covariant derivative of  $g$ and $F$ are given by
\begin{equation*}
\begin{split}
(\n_Xg)(Y,Z)=\frac16\Big[\eta(Y)d\eta(Z,X)+\eta(Z)d\eta(Y,X)\Big];\\
 (\n_XF)(Y,Z)=\frac13\Big[dF(X,Y,Z)-dF(X,Y,\phi Z)\Big]-\frac16\Big[\eta(Y)d\eta(Z,X)+\eta(Z)d\eta(Y,X)\Big].
\end{split}
\end{equation*}
\end{thrm}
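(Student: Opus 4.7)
The plan is to decouple the Einstein metricity condition \eqref{metein1} into two tensorial identities by separating symmetric and skew-symmetric parts in $(Y,Z)$, reconstruct $\n$ from these identities via a Koszul-type argument, identify the torsion by a cyclic summation, and finally read off the almost-nearly cosymplectic identity from $(\nabla_X\phi)Y$.

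First I would rewrite the right-hand side of \eqref{metein1}. Since $F(U,V)=g(\phi U,V)=-g(U,\phi V)$ by \eqref{acon}, and $T$ is totally $g$-skew, one gets $G(T(X,Y),Z)=T(X,Y,Z)-T(X,Y,\phi Z)$. Comparing symmetric and skew parts in $(Y,Z)$ on the two sides of \eqref{metein1} yields
\begin{equation*}
(\n_X g)(Y,Z)=\tfrac12[T(X,Y,\phi Z)+T(X,Z,\phi Y)],
\end{equation*}
\begin{equation*}
(\n_X F)(Y,Z)=-T(X,Y,Z)+\tfrac12[T(X,Y,\phi Z)-T(X,Z,\phi Y)].
\end{equation*}
Setting $h(X,Y,Z)=g(\n_X Y-\nabla_X Y,Z)$, total skew-symmetry of $T$ fixes the $(X,Y)$-antisymmetric part $h(X,Y,Z)-h(Y,X,Z)=T(X,Y,Z)$, while the first identity above fixes the $(Y,Z)$-symmetric part $h(X,Y,Z)+h(X,Z,Y)=-\tfrac12[T(X,Y,\phi Z)+T(X,Z,\phi Y)]$. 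A standard manipulation on rank-three symmetry types (same bookkeeping as in the classical Koszul formula, adapted to a non-metric connection) then solves for $h$ uniquely in terms of $T$, giving the connection $\n$ modulo the still-undetermined torsion $T$.

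To pin down $T$, I would take the cyclic sum over $(X,Y,Z)$ in the second identity above. Since $\nabla$ is torsion-free, the cyclic sum of $(\nabla_X F)(Y,Z)$ equals $dF(X,Y,Z)$, and the identity $(\nabla_X F)(Y,Z)=(\n_X F)(Y,Z)+F(H(X,Y),Z)+F(Y,H(X,Z))$ with $H=\n-\nabla$ converts everything into $h$ and $T$. Substituting the formula for $h$, the $\phi$-twisted torsion terms collapse by antisymmetry, leaving a relation of the form $dF(X,Y,Z)=-3T(X,Y,Z)$ plus $\eta$-corrections forced by the degeneracy $F(\xi,\cdot)=0$; rewriting these corrections using \eqref{ax}--\eqref{kill1} in terms of $d\eta$ produces $T=-\tfrac13 dF$, and back-substitution gives the explicit formula \eqref{ngtcon} for $\n$ together with the stated expressions for $(\n_X g)$ and $(\n_X F)$.

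The almost-nearly cosymplectic identity \eqref{ff3f} then follows from $g((\nabla_X\phi)Y,Z)=(\nabla_X F)(Y,Z)$, by converting $\nabla F$ back to $\n F$ via $H$ and simplifying. The converse direction runs the argument in reverse: assuming \eqref{ff3f}, define $\n$ and $T$ by \eqref{ngtcon} and \eqref{nk3ac}, verify that $T$ is totally $g$-skew, and check \eqref{metein1} by direct computation. The principal obstacle is the cyclic summation step, where one must carefully track how the $\xi$-degeneracy of $F$ together with \eqref{ax} conspires to produce the precise split of coefficients in \eqref{ff3f} (the factor $\tfrac13$ in front of $dF$ and $\eta(X)d\eta(Y,\phi Z)$ versus $\tfrac16$ in front of the $\eta(Y)d\eta(\phi X,Z)$ and $\eta(Z)d\eta(\phi X,Y)$ terms) rather than the naive uniform $\tfrac13$ one would obtain in the purely nearly cosymplectic case.
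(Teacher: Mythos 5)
First, note that the paper offers no proof of this statement: it is imported verbatim from \cite{IZl}, so there is no in-house argument to compare yours against. That said, your architecture is the natural (and, I believe, the actual) one: split \eqref{metein1} into its symmetric and skew parts in $(Y,Z)$, which prescribe $(\n g)$ and $(\n F)$ separately in terms of $T$; combine the $(\n g)$-identity with the total skew-symmetry of $T$ and the Koszul bookkeeping to get $g(\n_XY-\nabla_XY,Z)=\tfrac12T(X,Y,Z)-\tfrac12T(X,Z,\phi Y)-\tfrac12T(Y,Z,\phi X)$; then feed this into the $(\n F)$-identity and cyclically sum to identify $T$. Two corrections are needed, one cosmetic and one substantive. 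Cosmetic: after substitution the $F$-part of metricity reads $g((\nabla_X\phi)Y,Z)=-T(X,Y,Z)-T(X,\phi Y,\phi Z)+\tfrac12T(\phi X,Y,\phi Z)+\tfrac12T(\phi X,\phi Y,Z)$, and since the three cyclic sums over $(X,Y,Z)$ of the twice-$\phi$-twisted terms all coincide, the cyclic sum collapses to $dF=-3T$ exactly, with no $\eta$-corrections at that stage; the $\tfrac13$-versus-$\tfrac16$ split and all the $d\eta$ terms arise only afterwards, when this residual pointwise identity (with $T=-\tfrac13dF$ inserted) is converted into \eqref{ff3f} and \eqref{ngtcon}.

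The substantive gap: that conversion, as you describe it, invokes \eqref{ax} and \eqref{kill1}. But the paper presents those as properties of almost nearly cosymplectic manifolds, i.e.\ as consequences of \eqref{ff3f}, which is precisely what the forward implication is trying to establish; used as stated, the argument is circular. To close the loop you must first derive the needed contractions from the metricity condition itself: set $Z=\xi$ in the residual identity, use $\phi\xi=0$, $g((\nabla_X\phi)Y,\xi)=-g(\phi Y,\nabla_X\xi)$ and $g(\nabla_X\xi,\xi)=0$ to obtain $d\eta(X,\xi)=0$, the Killing property of $\xi$, and the relations $d\eta(X,Z)=dF(X,\phi Z,\xi)=dF(\phi X,Z,\xi)$, and only then rewrite the $\phi$-twisted $dF$ terms as the $\eta\otimes d\eta$ terms appearing in \eqref{ff3f} and \eqref{ngtcon}. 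With that insertion the outline is sound; the converse direction is, as you say, a direct verification.
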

\subsection{Nearly cosymplectic manifolds}
 We recall  here the definition  of nearly cosymplectic structures together with their basic properties from \cite{CD,NDY}.

An almost contact metric structure is cosymplectic if the endomorphism $\phi$ is parallel with respect to the Levi-Civita connection, $\nabla\phi=0$ and it is Sasakian if it is normal and contact, $[\phi,\phi]+\eta\otimes\xi=0, d\eta=-2F$. In terms of the Levi-Civita connection the Sasakian condition has the form

$(\nabla_X\phi)Y=g(X,Y)\xi-\eta(Y)X.$ For further details on Sasakian and  cosymplectic manifolds, we {refer the reader} to \cite{Blair,BG,CDY}.

 An almost contact metric manifold is called nearly cosymplectic if \cite{Bl,BS} $(\nabla_X\phi) X=0$
which is also equivalent to the condition
\begin{equation}\label{ncfxx}
g((\nabla_X\phi)Y,Z)=\frac13dF(X,Y,Z)
\end{equation}
In this case the vector field $\xi$ is Killing, $\nabla_{\xi}\xi=\nabla_{\xi}\eta=0$  and
\begin{equation}\label{ncxixx}
d\eta(\phi X,Y)=d\eta(X,\phi Y).
\end{equation}
The tensor field $h$ of type (1,1) defined by
\begin{equation}\label{hh}
\nabla_X\xi=hX
\end{equation}
has the following properties
\begin{eqnarray}\label{ncosh}
\aligned
g(hX,Y)=-g(X,hY)=\frac12d\eta(X,Y); \quad Ah+hA=0,
\endaligned\end{eqnarray}
i.e. it is skew-symmetric, anticommutes with $\phi$ and satisfies $h\xi=0, \eta\circ h=0$.

The following formulas also hold \cite{Endo,Endo1}:
\begin{equation}\label{nc15}
g((\nabla_X\phi)Y,hZ)=\eta(Y)g(h^2X,\phi Z)-\eta(X)g(h^2Y,\phi Z).
\end{equation}
\begin{equation}\label{nc16}
(\nabla_Xh)Y=g(h^2X,Y)\xi-\eta(Y)h^2X.
\end{equation}
\begin{equation}\label{nc17}
tr(h^2)=constant.
\end{equation}
A consequence of \eqref{nc16} and \eqref{nc17} is that the eigenvalues of symmetric operator $h^2$ are constants \cite{CD,NDY}. A fundamental observation that if $0$ is a simple eigenvalue of $h^2$ then the nearly cosymplectic manifold is of dimension five was made by A. de Nicola, G. Dileo and I. Yudin  in \cite{NDY} which lead to their
 structure theorem:
\begin{thrm} \cite[Theorem~4.5]{NDY} \label{ncstructure}
Let $(M,\phi, \xi, \eta, g)$  be a nearly cosymplectic non-cosymplectic manifold of dimension $2n+1>5$. Then $M$ is locally isometric to one of the following
Riemannian products:
$$\mathcal R\times N^{2n}, \quad M^5\times N^{2n-4},$$
where $N^{2n}$ is a nearly K\"ahler non-K\"ahler manifold,  $N^{2n-4}$ is a nearly K\"ahler  manifold and $M^5$
is a nearly cosymplectic non-cosymplectic manifold of dimension five.

If the manifold $M$ is complete and simply connected the above isometry is global.
\end{thrm}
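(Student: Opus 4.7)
My plan is to extract from the algebraic and differential properties of the operator $h$ a splitting of $TM$ into parallel distributions, apply the de~Rham decomposition theorem, and then identify the factors using the five-dimensional rigidity observation stated just before the theorem.

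First I would set up the decomposition. By \eqref{ncosh} the operator $h$ is skew-symmetric, so $h^2$ is symmetric and negative semi-definite; by \eqref{nc17} and the remark after it the eigenvalues are global constants, and $TM$ splits smoothly and orthogonally as $TM=D_0\oplus D_1\oplus\cdots\oplus D_k$ with $D_0=\ker h^2=\ker h$ of odd rank containing $\xi$, and each $D_i$ ($i\ge 1$) the eigenspace for a negative eigenvalue $-\lambda_i^2$. Each $D_i$ is even-dimensional and $\phi$-invariant because $h$ and $\phi$ anticommute by \eqref{ncosh}, so $\phi$ commutes with $h^2$.

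The central step is to prove that the two distributions $\mathcal{D}=\mathbb{R}\xi\oplus\bigoplus_{i\ge1}D_i$ and $\mathcal{D}^{\perp}=\ker h\cap\xi^{\perp}$ are both parallel for $\nabla$. A short computation from \eqref{nc16} yields the identity $(\nabla_Xh^2)Y=g(h^2X,hY)\xi-\eta(Y)h^3X$. For $Y\in D_i$ with $i\ge1$ one has $\eta(Y)=0$ and $h^2Y=-\lambda_i^2Y$; decomposing $\nabla_XY$ into its $h^2$-eigencomponents and matching eigenvalues shows $\nabla_XY\in D_i\oplus\mathbb{R}\xi\subseteq\mathcal{D}$. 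For $Y\in\mathcal{D}^{\perp}$ both terms on the right-hand side of the identity vanish, so $h^2(\nabla_XY)=0$; moreover $g(\nabla_XY,\xi)=-g(Y,hX)=0$ by skew-symmetry of $h$ and $hY=0$, so $\nabla_XY\in\mathcal{D}^{\perp}$. Together with $\nabla_X\xi=hX\in\mathcal{D}$, this gives the parallelism of both distributions.

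The theorem then follows from de~Rham. Locally, and globally under the completeness and simple-connectedness hypothesis, $M$ decomposes as a Riemannian product of the leaves of $\mathcal{D}$ and $\mathcal{D}^{\perp}$. The $\mathcal{D}^{\perp}$-factor has $\eta\equiv0$ and $\phi^2=-\mathrm{Id}$, and the identity $(\nabla_X\phi)X=0$ restricts to the nearly K\"ahler identity, so this factor is nearly K\"ahler. If $h\not\equiv0$, the $\mathcal{D}$-leaf carries a nearly cosymplectic structure with $\ker h=\mathbb{R}\xi$, hence is five-dimensional by the fundamental observation cited in the paper, producing the product $M^5\times N^{2n-4}$. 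If instead $h\equiv0$, then $\xi$ is parallel, $\mathcal{D}=\mathbb{R}\xi$, and one obtains $\mathcal{R}\times N^{2n}$; the non-cosymplectic hypothesis then forces $N^{2n}$ to be non-K\"ahler. I expect the main obstacle to be the five-dimensional rigidity result itself, which has to be imported as a black-box input; the parallelism computation above is the other technical core, but is obtained by a short direct calculation from \eqref{nc16}.
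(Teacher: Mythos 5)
This statement is imported verbatim from \cite[Theorem~4.5]{NDY} and the paper supplies no proof of its own, so the comparison is with the cited source: your reconstruction follows essentially the same route as the proof there --- constancy of the eigenvalues of $h^2$, parallelism of $\mathbb{R}\xi\oplus\mathrm{im}\,h$ and of $\ker h\cap\xi^{\perp}$ deduced from \eqref{nc16}, the de~Rham splitting, and the five-dimensionality of the factor on which $0$ is a simple eigenvalue of $h^2$. The argument is correct as a sketch; the one detail left implicit is that $\phi$ restricted to each parallel factor is projectable onto that factor, which follows from the total skew-symmetry of $g((\nabla_X\phi)Y,Z)$ combined with the $\phi$-invariance and parallelism of the two distributions.
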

 Note also that the nearly K\"ahler factor can be further decomposed according to \cite[Theorem~1.1 and Proposition~2.1]{N1}.

\subsection{Nearly cosymplectic manifolds in dimension 5} According to Theorem~\ref{ncstructure}, the non-trivial case of nearly cosymplectic manifolds is the 5-dimensional case. In this case B. Cappelletti-Montano and G. Dileo show in \cite{CD} that any 5-dimensional nearly cosymplectic manifold carryies a Sasaki-Einstein structure and vice-versa.
In order to describe the construction in \cite{CD} we recall below the notion of {an} $SU(2)$ structure developed by D. Conti and S. Salamon in \cite{CS}.

An SU(2) structure in dimension 5  is an SU(2)-reduction of the bundle  of linear frames and it  is equivalent to the existence of three almost contact
metric structures $(\phi_i, \xi, \eta, g), i=1,2,3$ related {to each other through} $\phi_i\phi_j=-\phi_j\phi_i=\phi_k$ for any even permutation \{i,j,k\} of \{1,2,3\}. In \cite{CS}
 Conti and Salamon proved that, in the spirit of special geometries, such a structure is equivalently determined by a quadruplet
$(\eta,\omega_1,\omega_2,\omega_3)$,
where $\eta$ is a 1-form and $\omega_i$ are 3 {2-forms} satisfying $\omega_i\wedge\omega_j=\delta_{ij}v$ for some 4-form $v$ with $v\wedge\eta\not=0$ and $X\lrcorner\omega_i=Y\lrcorner\omega_j\Longrightarrow \omega_k(X,Y)\ge 0$. The endomorphisms $\phi_i$, the Riemannian metric $g$ and the 2-forms $\omega_i$ are related {to each other through} $\omega_i(X,Y)=g(\phi X,Y)$. The class of Sasaki-Einstein structures in dimension 5 is characterized by the following differential equations
\begin{equation}\label{sasEinstein}
d\eta=-2\omega_3, \quad d\omega_1=3\eta\wedge\omega_2,\quad d\omega_2=-3\eta\wedge\omega_1.
\end{equation}
For such a manifold the almost contact metric structure $(\phi_3, \xi, \eta, g)$ is Sasakian, with Einstein
Riemannian metric $g$. A Sasaki-Einstein 5-manifold may { equivalently be} defined as a Riemannian manifold for which the cone metric is K\"ahler Ricci flat \cite{BG}.

There are several generalizations of Sasaki-Einstein structures in dimension five. We only recall that the  hypo structures introduced in \cite{CS} are defined by
\begin{equation}\label{hypo}
d\omega_3=0, \quad d(\eta\wedge\omega_1)=0,\quad d(\eta\wedge\omega_2)=0.
\end{equation}
These structures arise naturally on hypersurfaces of a 6-manifold endowed with an integrable SU(3) structure, i.e. a hypersurface of a K\"ahler Ricci flat 6-manifold.

Starting with a 5 dimensional nearly cosymplectic manifold  B. Cappelletti-Montano and G. Dileo show in \cite[Theorem~5.1]{CD} that
\begin{equation}\label{spech}h^2=-\lambda^2(I-\eta\otimes\xi), \lambda=const.\not=0
\end{equation}
induces an SU(2) structure $(\eta,\omega_1,\omega_2,\omega_3)$ determined by
\begin{equation}\label{su2}
\phi_1=-\frac1{\lambda}\phi h, \quad \phi_2=\phi, \quad \phi_3=-\frac1{\lambda}h; \qquad \omega_i(X,Y)=g(\phi_i X,Y)
\end{equation}
which satisfies the relations
\begin{equation}\label{sasnk}
d\eta=-2\lambda\omega_3, \quad d\omega_1=3\lambda\eta\wedge\omega_2,\quad d\omega_2=-3\lambda\eta\wedge\omega_1.
\end{equation}
In particular these SU(2) structures are hypo.

Consider the homothetic  SU(2) structures determined by $\tilde\eta=\lambda\eta, \quad \tilde\omega_i=\lambda^2\omega_i$ It follows from  \eqref{sasnk} that these new SU(2) structures satisfy \eqref{sasEinstein} and therefore $\o_3$ is a Sasaki-Einsten structure while $\o_1$ and $\o_2$ are nearly cosymplectic structures \cite{CD}. This shows
\begin{thrm}\cite{CD}\label{sasncos}
A nearly cosymplectic non cosymplectic 5-manifold carries a Sasaki-Einstein structure and vice versa, any Sasaki-Einstein 5 manifold supports 2 nearly cosymplectic structures. In particular, a nearly cosymplectic non cosymplectic 5-manifold is  Einstein with positive scalar curvature.

In terms of nearly cosymplectic structures, the attached  Sasaki-Einstein structure
 is given by
$$
\tilde\phi=-\frac1{\lambda}h,\quad \tilde\xi=\frac1{\lambda}\xi,\quad \tilde\eta=\lambda\eta,\quad \tilde g=\lambda^2g,\quad Scal=\lambda^2 \tilde Scal=20\lambda^2
$$
while $(\phi,\tilde\eta,\tilde g)$ and $(-\frac1{\lambda}\phi h,\tilde\eta,\tilde g)$ are nearly cosymplectic structures.
\end{thrm}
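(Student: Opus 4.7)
The plan is to exploit the $SU(2)$ reformulation of both structure classes and reduce the theorem to a linear rescaling of the defining differential equations. Starting from a nearly cosymplectic non cosymplectic 5-manifold I would first verify that the triple $(\phi_1,\phi_2,\phi_3)$ defined in \eqref{su2} assembles into an $SU(2)$ structure in the Conti--Salamon sense. The quaternionic relations $\phi_i\phi_j=-\phi_j\phi_i=\phi_k$ come directly from \eqref{ncosh} (which gives $\phi h=-h\phi$) together with \eqref{spech} and $h\xi=0$: for instance $\phi_1\phi_2=-\tfrac{1}{\lambda}\phi h\phi=\tfrac{1}{\lambda}\phi^2h=-\tfrac{1}{\lambda}h=\phi_3$, and the remaining relations are analogous. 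Compatibility with $g$ is automatic since $h$ is $g$-skew and $\phi$ is $g$-orthogonal.

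Next I would derive the differential identities \eqref{sasnk}. The equality $d\eta=-2\lambda\omega_3$ is immediate from $\omega_3(X,Y)=-\tfrac{1}{\lambda}g(hX,Y)=-\tfrac{1}{2\lambda}d\eta(X,Y)$ via \eqref{ncosh}. For $d\omega_1$ and $d\omega_2$ one differentiates $\omega_i(X,Y)=g(\phi_i X,Y)$ and substitutes the nearly cosymplectic covariant derivatives \eqref{ncfxx}, \eqref{nc15} and \eqref{nc16}; the resulting skew-symmetrization collapses, using \eqref{spech}, into the $\eta$-vertical expressions in \eqref{sasnk}. With \eqref{sasnk} in hand, the D-homothety $\tilde\eta=\lambda\eta$, $\tilde\omega_i=\lambda^2\omega_i$ (equivalently $\tilde g=\lambda^2 g$, $\tilde\xi=\xi/\lambda$, $\tilde\phi_i=\phi_i$) converts \eqref{sasnk} into \eqref{sasEinstein}, since $d\tilde\eta=\lambda d\eta=-2\lambda^2\omega_3=-2\tilde\omega_3$ and $d\tilde\omega_1=\lambda^2 d\omega_1=3\lambda^3\eta\wedge\omega_2=3\tilde\eta\wedge\tilde\omega_2$, with the analogous identity for $d\tilde\omega_2$. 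By the Conti--Salamon characterization the triple $(\tilde\phi_3,\tilde\xi,\tilde\eta,\tilde g)$ is then Sasaki--Einstein.

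For the converse I would rescale backwards: an $SU(2)$ quadruplet satisfying \eqref{sasEinstein} produces, for each $\lambda\neq 0$, data $(\eta,\omega_i)=(\tilde\eta/\lambda,\tilde\omega_i/\lambda^2)$ satisfying \eqref{sasnk}. The substantive content is then to check that the non-Sasakian members $(\phi_1,\xi,\eta,g)$ and $(\phi_2,\xi,\eta,g)$ are genuinely nearly cosymplectic, i.e.\ $g((\nabla_X\phi_i)Y,Z)=\tfrac{1}{3}d\omega_i(X,Y,Z)$, equivalently that $\nabla\phi_i$ is totally skew in $(X,Y)$. I would derive this by combining the Sasakian identity $(\tilde\nabla_X\tilde\phi_3)Y=\tilde g(X,Y)\tilde\xi-\tilde\eta(Y)X$ with the quaternionic factorizations $\phi_1=\phi_2\phi_3$, $\phi_2=\phi_3\phi_1$: differentiating these products and plugging in the Sasakian formula forces $\tilde\nabla\tilde\phi_1$ and $\tilde\nabla\tilde\phi_2$ into the totally skew component (the nearly cosymplectic Gray class), and since a constant conformal rescaling does not change the Levi-Civita connection the identity transfers to $g$. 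Finally, the Einstein property and the scalar curvature $\mathrm{Scal}=20\lambda^2$ follow from $\widetilde{\mathrm{Ric}}=4\tilde g$ (standard for 5-dimensional Sasaki--Einstein, giving $\widetilde{\mathrm{Scal}}=20$) combined with the rescaling $\tilde g=\lambda^2 g$, which leaves $\mathrm{Ric}$ invariant as a $(0,2)$-tensor but divides the scalar by $\lambda^2$. The main obstacle is this converse step: translating the exterior differential conditions \eqref{sasEinstein} back into the covariant constraint \eqref{ncfxx} on $\phi_1,\phi_2$ via the quaternionic bootstrap from the Sasakian $\phi_3$; the remaining steps are linear rescaling and the standard Sasaki--Einstein curvature identity.
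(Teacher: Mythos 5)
Your overall route is the same as the paper's: the paper simply quotes \cite{CD} for the fact that the tensor $h$ of a nearly cosymplectic non-cosymplectic $5$-manifold satisfies \eqref{spech} and induces an $SU(2)$ structure obeying \eqref{sasnk}, and then performs exactly the homothety $\tilde\eta=\lambda\eta$, $\tilde\omega_i=\lambda^2\omega_i$ to land on the Sasaki--Einstein system \eqref{sasEinstein}. Your verification of the quaternionic relations, the identity $d\eta=-2\lambda\omega_3$, the rescaling arithmetic, and the curvature bookkeeping ($\widetilde{Ric}=4\tilde g$, $\widetilde{Scal}=20$, hence $Scal=\lambda^2\widetilde{Scal}=20\lambda^2$) are all correct and consistent with what the paper asserts.

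The one place where your sketch has a genuine gap is the converse, which you yourself flag as the main obstacle. Differentiating $\phi_1=\phi_2\phi_3$ and $\phi_2=\phi_3\phi_1$ gives
$\nabla\phi_1=(\nabla\phi_2)\phi_3+\phi_2(\nabla\phi_3)$ and $\nabla\phi_2=(\nabla\phi_3)\phi_1+\phi_3(\nabla\phi_1)$; substituting one into the other and using the Sasakian formula for $\nabla\phi_3$ only determines the combination $\nabla\phi_1-\phi_3(\nabla\phi_1)\phi_3$, i.e.\ a projection of $\nabla\phi_1$, not the full tensor. So the ``quaternionic bootstrap'' alone cannot force $\nabla\phi_1$ and $\nabla\phi_2$ into the totally skew (nearly cosymplectic) class. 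The missing ingredient is that for an $SU(2)$ structure in dimension five the intrinsic torsion is completely determined by the exterior derivatives of the defining forms $(\eta,\omega_1,\omega_2,\omega_3)$ (this is the Conti--Salamon point of view in \cite{CS}); only once you invoke this, or redo the direct computation of \cite{CD}, do the equations $d\omega_1=3\eta\wedge\omega_2$, $d\omega_2=-3\eta\wedge\omega_1$ upgrade to the pointwise identity $g((\nabla_X\phi_i)Y,Z)=\frac13 d\omega_i(X,Y,Z)$ for $i=1,2$. The paper sidesteps this entirely by citing \cite{CD}, so your attempt is more ambitious but needs this extra lemma to close.
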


\section{Almost nearly cosymplectic manifolds}
Let $(M,\phi, \xi, \eta, g)$  be an almost  nearly cosymplectic  manifold of dimension $2n+1$. For the (1,1) tensor $h$ defined by \eqref{hh} we have
\begin{lemma}
On an almost nearly cosymplectic manifolds the following relations { are}
valid:
\begin{eqnarray}\label{ancosh}
\aligned
&g(hX,Y)=-g(X,hY)=\frac12d\eta(X,Y);\\
&h\xi=0,\quad \phi h+h\phi=0;\quad (\nabla_X\phi)\xi=-\phi hX.
\endaligned\end{eqnarray}
\end{lemma}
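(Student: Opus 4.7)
The plan is to observe that every identity in \eqref{ancosh} follows almost immediately from material already displayed in the preliminaries, so the proof should be a short chain of consequences rather than a new calculation. The three ingredients I will keep at hand are: the Killing-type identity \eqref{kill1}, which computes $\nabla_X\xi$ directly in terms of $d\eta$; the relations in \eqref{ax} describing how $d\eta$ interacts with $\phi$ and with $\xi$; and the basic compatibility identities \eqref{acon}, in particular $\phi\xi = 0$.

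First I will establish the symmetry identities for $h$. By definition $\nabla_X\xi = hX$, so pairing with $Y$ and invoking \eqref{kill1} gives
\begin{equation*}
g(hX,Y) = g(\nabla_X\xi,Y) = (\nabla_X\eta)Y = \tfrac12 d\eta(X,Y).
\end{equation*}
The equality $g(hX,Y) = -g(X,hY)$ is then immediate from the skew-symmetry of $d\eta$, so $h$ is skew-symmetric with respect to $g$.

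Next I will read off the algebraic properties of $h$. For $h\xi = 0$, the formula just derived gives $g(h\xi,Y) = \tfrac12 d\eta(\xi,Y) = 0$ for every $Y$, using the identity $d\eta(X,\xi) = 0$ from \eqref{ax}; hence $h\xi = 0$. For the anticommutation $\phi h + h\phi = 0$, I compute
\begin{equation*}
g(\phi hX,Y) + g(h\phi X,Y) = -g(hX,\phi Y) + \tfrac12 d\eta(\phi X,Y) = -\tfrac12 d\eta(X,\phi Y) + \tfrac12 d\eta(\phi X,Y),
\end{equation*}
which vanishes by the $\phi$-invariance $d\eta(\phi X,Y) = d\eta(X,\phi Y)$ recorded in \eqref{ax}. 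Since $Y$ was arbitrary and $g$ is non-degenerate, $\phi h + h\phi = 0$.

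Finally, I will get the last relation by differentiating the identity $\phi\xi = 0$ from \eqref{acon}. Applying $\nabla_X$ and using the product (Leibniz) rule,
\begin{equation*}
0 = \nabla_X(\phi\xi) = (\nabla_X\phi)\xi + \phi\nabla_X\xi = (\nabla_X\phi)\xi + \phi hX,
\end{equation*}
so $(\nabla_X\phi)\xi = -\phi hX$. There is no genuine obstacle here; the only point that requires care is citing the right line of \eqref{ax} for the two facts $d\eta(\cdot,\xi) = 0$ and $d\eta(\phi\,\cdot,\cdot) = d\eta(\cdot,\phi\,\cdot)$, both of which hold in the almost nearly cosymplectic setting and not only in the nearly cosymplectic one.
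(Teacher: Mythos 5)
Your proof is correct and, for the first three identities, follows exactly the paper's route: $g(hX,Y)=\tfrac12 d\eta(X,Y)$ from the Killing identity \eqref{kill1}, then $h\xi=0$ and $\phi h+h\phi=0$ from the relations $d\eta(X,\xi)=0$ and $d\eta(\phi X,Y)=d\eta(X,\phi Y)$ in \eqref{ax}. The only deviation is the final identity $(\nabla_X\phi)\xi=-\phi hX$: the paper obtains it by substituting $Y=\xi$ into the defining condition \eqref{ff3f} and simplifying with \eqref{ax}, whereas you differentiate $\phi\xi=0$ and use the Leibniz rule together with $\nabla_X\xi=hX$. Your route is shorter and in fact shows the identity holds on any almost contact metric manifold once $h$ is defined by \eqref{hh}, without invoking the almost nearly cosymplectic hypothesis; both arguments are valid.
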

\begin{proof}
The Killing condition \eqref{kill1} together with \eqref{hh} yield
$g(\nabla_X\xi,Y)=g(hX,Y)=\frac12d\eta(X,Y)=-g(hY,X)$ which proves the first equality.  The second and the third are consequences of the first one and the first two lines in \eqref{ax}. Indeed, for the third one  we have
$$g(h\phi X,Y)=\frac12d\eta(\phi X,Y)=\frac12d\eta(X,\phi Y)=g(hX,\phi Y)=-g(\phi hX,Y)$$
 The last equation follows directly from \eqref{ff3f}.
\end{proof}

\subsection{D-homothetic transformations}
We recall \cite{Tano, Tano1} that the almost contact metric structure $(\bar\phi,\bar\xi,\bar\eta,\bar g)$ defined by
\begin{equation}\label{dhom}
\bar\eta=a\eta, \quad \bar\xi=\frac1a\xi, \quad \bar\phi=\phi, \quad \bar g=ag+(a^2-a)\eta\otimes\eta, \quad a>0 \quad constant
\end{equation}
is called D-homothetic to $(\phi,\xi,\eta,g)$.

We have $\bar F=aF$.

Our main result follows
\begin{thrm}\label{d-hom}
Any almost nearly cosymplectic structure is D-homothetic to a nearly cosymplectic structure and vice versa.

The corresponding (1,1) tensors $\bar h$ and $h$ coincide.
\end{thrm}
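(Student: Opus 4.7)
The plan is to exhibit an explicit value of the D-homothetic parameter $a$ that converts a nearly cosymplectic structure into an almost nearly cosymplectic one, and then invoke the fact that D-homothetic deformations form a one-parameter group (the inverse of the deformation with parameter $a$ being the deformation with parameter $1/a$), so that this single computation establishes both directions of the equivalence.

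First I would record the transformation of the basic tensors under \eqref{dhom}: since $\eta\circ\phi=0$ one has $\bar F=aF$, hence $d\bar F=a\,dF$ and $d\bar\eta=a\,d\eta$. Next, applying the Koszul formula to both $g$ and $\bar g=ag+(a^2-a)\eta\otimes\eta$ and subtracting, I would compute the difference tensor $S(X,Y):=\bar\nabla_XY-\nabla_XY$. The simplifications use the Killing condition $(\nabla_X\eta)Y=\tfrac12 d\eta(X,Y)$ together with $\eta\circ h=0$ and $h\xi=0$ from \eqref{ncosh}; the numerous terms coming from $\eta\otimes\eta$ collapse, yielding
\[
S(X,Y)=(a-1)\bigl[\eta(Y)hX+\eta(X)hY\bigr].
\]
Specialising $Y=\xi$ gives $\bar\nabla_X\bar\xi=\tfrac1a\bar\nabla_X\xi=hX$, so the auxiliary tensor of the barred structure coincides with $h$, proving the second assertion of the theorem.

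Given the formula for $S$, a short tensorial computation using $h\phi=-\phi h$ produces
\[
(\bar\nabla_X\bar\phi)Y=(\nabla_X\phi)Y+(a-1)\bigl[2\eta(X)h\phi Y+\eta(Y)h\phi X\bigr].
\]
Assuming the unbarred structure is nearly cosymplectic, I would substitute $g((\nabla_X\phi)Y,Z)=\tfrac13dF(X,Y,Z)$ from \eqref{ncfxx} and the identity $\eta((\nabla_X\phi)Y)=-\tfrac12 d\eta(X,\phi Y)$, obtained by differentiating $\eta(\phi Y)=0$ and using the Killing property. Expanding $\bar g((\bar\nabla_X\bar\phi)Y,Z)$ via $\bar g=ag+(a^2-a)\eta\otimes\eta$, replacing each occurrence of $g(h\phi\,\cdot\,,\,\cdot\,)$ by the appropriate $\tfrac12 d\eta$, and exploiting the symmetry $d\eta(\phi X,Y)=d\eta(X,\phi Y)$ from \eqref{ncxixx}, I would compare the result with the almost-nearly-cosymplectic equation \eqref{ff3f} written for the barred structure. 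The three $\eta$-linear coefficients all reduce to the single scalar equation $a(a-1)=a^2/3$, forcing $a=3/2$; the reverse direction is then the D-homothetic deformation with parameter $2/3$.

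The main obstacle is the bookkeeping in the Koszul-difference step: the six Lie-bracket and directional-derivative terms must each be rewritten through $\nabla$ and the Killing identity, and one must see that the non-$\eta$-linear contributions cancel, leaving the clean formula for $S$. Once that formula is secured, the remaining verifications are forced algebraically by matching coefficients, and the value $a=3/2$ emerges automatically.
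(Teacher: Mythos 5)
Your proposal is correct and follows essentially the same route as the paper: both derive the relation between the two Levi-Civita connections from the Koszul formula and the Killing property of $\xi$, transform the defining equation for $\nabla\phi$ under the deformation, and match coefficients against \eqref{ff3f} to force $a=\tfrac32$ (with $a=\tfrac23$ for the converse). Your formula $S(X,Y)=(a-1)\bigl[\eta(Y)hX+\eta(X)hY\bigr]$ is exactly the paper's \eqref{d-nabla} rewritten via $g(hX,Z)=\tfrac12 d\eta(X,Z)$, and your derivation of $\bar h=h$ by setting $Y=\xi$ is a slightly cleaner version of the paper's argument.
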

\begin{proof}
Let $\{X_1,\dots,X_{2n},X_{2n+1}=\xi\}$ be an orthonormal basis. The Kozsul formula and \eqref{dhom} give
\begin{multline}\label{conn}
2\bar g(\bar\nabla_{X_i}X_j,X_k)=X_i(\bar g(X_j,X_k)+X_j(\bar g(X_i,X_k)-X_k(\bar g(X_i,X_j)\\+\bar g([X_i,X_j],X_k)+\bar g([X_k,X_i],X_j)-\bar g([X_j,X_k],X_i)
\\=2a g(\nabla_{X_i}X_j,X_k)
+(a^2-a)\Big[X_i[\eta(X_j)\eta(X_k)]+X_j[\eta(X_k)\eta(X_i)]-X_k[\eta(X_i)\eta(X_j)]\Big]\\
+(a^2-a)\Big[\eta([X_i,X_j])\eta(X_k)]+\eta([X_k,X_i])\eta(X_i)]-\eta([X_j,X_k])\eta(X_i)\Big]\\
=2a g(\nabla_{X_i}X_j,X_k)
+(a^2-a)\Big[(\nabla_{X_i}\eta)X_j+(\nabla_{X_j}\eta)X_i+2\eta(\nabla_{X_i}X_j)\eta(X_k)\Big]\\
+(a^2-a)\Big[[\nabla_{X_i}\eta)X_k-(\nabla_{X_k}\eta)X_i]\eta(X_j)+[\nabla_{X_j}\eta)X_k-(\nabla_{X_k}\eta)X_j]\eta(X_i)\Big]\\
=2a g(\nabla_{X_i}X_j,X_k)
+(a^2-a)\Big[(\nabla_{X_i}\eta)X_j+(\nabla_{X_j}\eta)X_i+2\eta(\nabla_{X_i}X_j)\eta(X_k)\Big]\\
+(a^2-a)\Big[d\eta(X_i,X_k)\eta(X_j)+d\eta(X_j,X_k)\eta(X_i)\Big]
\end{multline}
The Killing condition applied to \eqref{conn} yields
\begin{multline}\label{conn1}
\bar g(\bar\nabla_{X_i}X_j,X_k)=a g(\nabla_{X_i}X_j,X_k)\\
+\frac{a^2-a}2\Big[d\eta(X_i,X_k)\eta(X_j)+d\eta(X_j,X_k)\eta(X_i)+2\eta(\nabla_{X_i}X_j)\eta(X_k)\Big]
\end{multline}
Using the Killing condition, we evaluate the last term in \eqref{conn1} as follows,
\begin{equation}\label{term}
2\eta(\nabla_{X_i}X_j)=2g(\nabla_{X_i}X_j,\xi)=-2g(\nabla_{X_i}\xi,X_j)=-d\eta(X_i,X_j).
\end{equation}
For a D-homothetic transformation with Killing vector field $\xi$ we obtain substituting \eqref{term} into \eqref{conn1} that
\begin{multline}\label{conKil}
\bar g(\bar\nabla_{X_i}X_j,X_k)=ag(\nabla_{X_i}X_j,X_k)\\+\frac{a^2-a}2\Big[d\eta(X_i,X_k)\eta(X_j)+d\eta(X_j,X_k)\eta(X_i)-d\eta(X_i,X_j)\eta(X_k)\Big]\end{multline}
We have, applying \eqref{conKil}, that the following tensor equality holds for all vector fields $X,Y,Z$
\begin{multline}\label{barF}
\bar g((\bar\nabla_X\phi)Y,Z)=\bar g(\bar\nabla_X\phi Y,Z)+\bar g(\bar\nabla_XY,\phi Z)\\=ag((\nabla_X\phi)Y,Z)+\frac{a^2-a}2\Big[d\eta(\phi Y,Z)\eta(X)-d\eta(X,\phi Y)\eta(Z)+d\eta(X,\phi Z)\eta(Y)+d\eta(Y,\phi Z)\eta(X)\Big]
\\=ag((\nabla_X\phi)Y,Z)+\frac{a^2-a}2\Big[2d\eta(\phi Y,Z)\eta(X)+d\eta(X,\phi Z)\eta(Y)-d\eta(X,\phi Y)\eta(Z)\Big],
\end{multline}
where the last equality follows from \eqref{ncxixx}.

Suppose $(\phi,\xi,\eta,g)$ is a nearly cocymplectic structure, i.e. \eqref{ncfxx} holds. Then, taking $a=\frac32$,
the { equalities} \eqref{barF}, \eqref{ncfxx} and \eqref{ncxixx}  { imply}
\begin{multline}\label{anc-nc}
\bar g((\bar\nabla_X\phi)Y,Z)=\frac32g((\nabla_X\phi)Y,Z)+\frac38\Big[2d\eta(\phi Y,Z)\eta(X)+d\eta(X,\phi Z)\eta(Y)-d\eta(X,\phi Y)\eta(Z)\Big]\\
=\frac13d\bar F(X,Y,Z)+\frac16\Big[2d\bar\eta(Y,\phi Z)\bar\eta(X)-d\bar\eta(Z,\phi X)\bar\eta(Y)-d\bar\eta(\phi X,Y)\bar\eta(Z)\Big]
\end{multline}
i.e. the structure  $(\bar\phi,\bar\xi,\bar\eta,\bar g)$ is an almost nearly cosymplectic. Vice versa, starting from an almost nearly cosymplectic structure and making a D-homothetic deformation with constant $a=\frac23$ we get a nearly cosymplectic one which proves the first claim.

To show that $\bar h=h$,  we put $X_j=\bar \xi=\frac23\xi$ in \eqref{conKil} taken  with $a=\frac32$  and use \eqref{ancosh} to get
\begin{equation*}
\bar g(\bar\nabla_{X_i}\bar \xi,X_k)=g(\nabla_{X_i}\xi,X_k)+\frac{1}4d\eta(X_i,X_k)=\frac{3}2g(\nabla_{X_i}\xi,X_k).\end{equation*}
On the other { hand}, from (\ref{dhom}), we have
\begin{equation*}
\bar g(\nabla_{X_i} \xi,X_k)=\frac 3 2g( \nabla_{X_i}
\xi,X_k)+\frac{3}4\eta( \nabla_{X_i} \xi)\eta(X_k)=\frac 3 2g(\nabla_{X_i}\xi,X_k).
\end{equation*}
The last two equalities imply $ \bar hX_i=hX_i$ which completes the proof .
\end{proof}
\begin{cor}\label{nabla}
Let $(M,\phi, \xi, \eta, g)$  be an almost contact metric manifold with Killing vector field $\xi$. The D-homothetic almost contact metric manifold  $(M,\bar\phi, \bar\xi, \bar\eta, \bar g)$
has a Killing vector field $\bar\xi$ and { the two corresponding Levi-Civita connections $\bn$ and $\nabla$ of these manifolds} are related by
\begin{equation}\label{d-nabla}
g(\bar\nabla_XY,Z)=g(\nabla_XY,Z)+\frac{a^2-a}{2a}\Big[d\eta(X,Z)\eta(Y)+d\eta(Y,Z)\eta(X)\Big].
\end{equation}
\end{cor}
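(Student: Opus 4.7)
The plan is to obtain \eqref{d-nabla} as a direct reformulation of the identity \eqref{conKil} from the proof of Theorem~\ref{d-hom}: that derivation used only the Killing property of $\xi$ together with the D-homothetic definition of $\bar g$, so \eqref{conKil} is available under the hypotheses of this corollary. It remains to convert the $\bar g$-pairing on its left-hand side into a $g$-pairing.

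I would first observe that $S(X,Y) := \bn_X Y - \nabla_X Y$ is a genuine $(1,2)$-tensor, even though the connections themselves are not tensorial in $Y$. Working with an orthonormal frame $\{X_1,\dots,X_{2n+1}=\xi\}$ and using $\bar g = ag + (a^2-a)\eta\otimes\eta$, I would rewrite \eqref{conKil} as $a\,g(S(X_i,X_j),X_k) + (a^2-a)\,\eta(\bn_{X_i}X_j)\,\eta(X_k) = \tfrac{a^2-a}{2}\bigl[d\eta(X_i,X_k)\eta(X_j) + d\eta(X_j,X_k)\eta(X_i) - d\eta(X_i,X_j)\eta(X_k)\bigr]$. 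Splitting $\eta(\bn_{X_i}X_j) = \eta(\nabla_{X_i}X_j) + \eta(S(X_i,X_j))$ and applying \eqref{term}, which replaces $2\eta(\nabla_{X_i}X_j)$ by $-d\eta(X_i,X_j)$ for frame vectors, makes the $d\eta(X_i,X_j)\eta(X_k)$ terms cancel. The remaining identity $\bar g(S(X_i,X_j),X_k) = \tfrac{a^2-a}{2}\bigl[d\eta(X_i,X_k)\eta(X_j) + d\eta(X_j,X_k)\eta(X_i)\bigr]$ is tensorial on both sides, hence extends to $\bar g(S(X,Y),Z) = \tfrac{a^2-a}{2}\bigl[d\eta(X,Z)\eta(Y) + d\eta(Y,Z)\eta(X)\bigr]$ for all $X,Y,Z$. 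Setting $Z = \xi$ and using $d\eta(\xi,\cdot) = 0$ (discussed below) makes the right-hand side vanish, while the left-hand side equals $a^2\,\eta(S(X,Y))$; hence $\eta(S(X,Y)) = 0$. Consequently $\bar g(S(X,Y),Z) = a\,g(S(X,Y),Z)$, and dividing by $a$ yields \eqref{d-nabla}.

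For the Killing property of $\bar\xi$, the quickest route is via Cartan's formula: since $\bar\xi = \xi/a$ with $a$ constant, $\mathcal{L}_{\bar\xi}\bar g = \mathcal{L}_\xi g + (a-1)\,\mathcal{L}_\xi(\eta\otimes\eta)$. The first summand vanishes by hypothesis, and $\mathcal{L}_\xi\eta = \iota_\xi d\eta + d(\eta(\xi)) = 0$ because $d\eta(\xi,\cdot) = 0$ and $\eta(\xi)=1$, so the second summand vanishes as well. The one mildly technical step, shared by both parts of the proof, is the vanishing $d\eta(\xi,\cdot) = 0$: a unit Killing vector field has $\nabla_\xi\xi = 0$ (Killing makes $\nabla\xi$ skew, and $|\xi|=1$ forces $g(\xi,\nabla_X\xi) = 0$, whence $g(\nabla_\xi\xi,X) = -g(\xi,\nabla_X\xi) = 0$), and then $d\eta(\xi,X) = (\nabla_\xi\eta)X - (\nabla_X\eta)\xi = g(\nabla_\xi\xi,X) - g(\nabla_X\xi,\xi) = 0$. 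Everything else is routine bookkeeping.
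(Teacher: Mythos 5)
Your proposal is correct, and it follows the paper's broad strategy (specialize \eqref{conKil}, expand $\bar g$ in terms of $g$, and isolate the difference tensor), but the two finishing arguments genuinely differ. The paper first proves that $\bar\xi$ is Killing for $\bar g$ and then uses that fact to evaluate the awkward term via $\bar\eta(\bar\nabla_{X_i}X_j)=-\tfrac12 d\bar\eta(X_i,X_j)$; your route instead splits $\eta(\bar\nabla_XY)=\eta(\nabla_XY)+\eta(S(X,Y))$, cancels the $d\eta(X,Y)\eta(Z)$ terms using \eqref{term}, and then kills $\eta(S)$ by substituting $Z=\xi$ into the resulting tensorial identity. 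This makes the derivation of \eqref{d-nabla} logically independent of the Killing property of $\bar\xi$, which you then prove separately and more cleanly via $\mathcal{L}_{\bar\xi}\bar g=\mathcal{L}_\xi g+(a-1)\mathcal{L}_\xi(\eta\otimes\eta)$ and Cartan's formula, rather than by the paper's frame computation. A further merit of your write-up is that you explicitly establish $d\eta(\xi,\cdot)=0$ from the unit Killing hypothesis; the paper uses this fact tacitly (it is only recorded in \eqref{ax} for the almost nearly cosymplectic case, whereas the corollary is stated for a general almost contact metric manifold with Killing $\xi$), so your version is actually the more complete one. All the individual computations you indicate ($\bar g(V,\xi)=a^2\eta(V)$, the cancellation of the $d\eta(X_i,X_j)\eta(X_k)$ terms, the tensoriality of $S$) check out.
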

\begin{proof}
Observe that any orthonormal basis for $g$ is an orthogonal basis for $\bar g$. We obtain from \eqref{conKil} and \eqref{dhom}
$$
\bar g(\bar\nabla_{X_i}\bar\xi,X_k)=g(\nabla_{X_i}\xi,X_k)+\frac{a^2-a}{2a}d\eta(X_i,X_k)
$$
which shows that $\bar\xi$ is a Killing vector field for $\bar g$.

 Using \eqref{dhom} together with the Killing condition, we have
\begin{multline}\label{barnabla}
\bar g(\bar\nabla_{X_i}X_j,X_k)=ag(\bar\nabla_{X_i}X_j,X_k)+(a^2-a)\eta((\bar\nabla_{X_i}X_j)\eta(X_k)\\=ag(\bar\nabla_{X_i}X_j,X_k)+\frac{a^2-a}a\bar\eta((\bar\nabla_{X_i}X_j)\eta(X_k)=ag(\bar\nabla_{X_i}X_j,X_k)-\frac{a^2-a}{2a}d\bar\eta({X_i}X_j)\eta(X_k)\\
=ag(\bar\nabla_{X_i}X_j,X_k)-\frac{a^2-a}{2}d\eta({X_i}X_j)\eta(X_k)
\end{multline}
Compare \eqref{barnabla} with \eqref{conKil} to get
\begin{equation*}\label{nnabla}
ag(\bar\nabla_{X_i}X_j-\nabla_{X_i}X_j,X_k)=\frac{a^2-a}2\Big[d\eta(X_i,X_k)\eta(X_j)+d\eta(X_j,X_k)\eta(X_i)\Big]
\end{equation*}
which implies \eqref{nabla} since the difference between { these two connections} is a tensor field.
\end{proof}
Combine Theorem~\ref{d-hom} with Theorem~\ref{ncstructure} to get the structure theorem for almost nearly cosymplectic manifold.
\begin{thrm}\label{ancstructure}
Let $(M,\phi, \xi, \eta, g)$  be an almost nearly cosymplectic non-cosymplectic manifold of dimension $2n+1>5$. Then $M$ is locally D-homothetic with { the} constant $a=\frac23$ to one of the following
Riemannian products:
$$\mathcal R\times N^{2n}, \quad M^5\times N^{2n-4},$$
where $N^{2n}$ is a nearly K\"ahler non-K\"ahler manifold,  $N^{2n-4}$ is a nearly K\"ahler  manifold and $M^5$
is a nearly cosymplectic non-cosymplectic manifold of dimension five.

If the manifold $M$ is complete and simply connected the above D-homothety is global.
\end{thrm}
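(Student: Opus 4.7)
The strategy is a direct combination of Theorem~\ref{d-hom}, which establishes a one-to-one correspondence between almost nearly cosymplectic and nearly cosymplectic structures via D-homothety, with the structure Theorem~\ref{ncstructure} of de Nicola-Dileo-Yudin. First I would apply Theorem~\ref{d-hom} with $a=\tfrac23$ to the given almost nearly cosymplectic structure $(\phi,\xi,\eta,g)$, obtaining a nearly cosymplectic structure $(\bar\phi,\bar\xi,\bar\eta,\bar g)$ on the same underlying manifold of the same dimension $2n+1>5$. Once I verify that this new structure is non-cosymplectic, Theorem~\ref{ncstructure} applied to $(M,\bar\phi,\bar\xi,\bar\eta,\bar g)$ yields the asserted local Riemannian product decomposition, and the D-homothety with constant $\tfrac23$ then transfers this decomposition back to $(M,\phi,\xi,\eta,g)$.

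The only point requiring verification, and the main obstacle I anticipate, is the preservation of non-cosymplecticity. The second assertion of Theorem~\ref{d-hom} gives $\bar h=h$, so assuming for contradiction that $(\bar\phi,\bar\xi,\bar\eta,\bar g)$ is cosymplectic would force $\bar\nabla\bar\xi=0$, hence $\bar h=0$ and therefore $h=0$; by \eqref{ancosh} this entails $d\eta=0$. Plugging $d\eta=0$ into the transformation formula \eqref{barF}, which was derived for any almost contact metric manifold with Killing Reeb vector field, the whole bracket on the right-hand side vanishes and the assumed identity $\bar\nabla\bar\phi=0$ collapses to $\nabla\phi=0$, making the original structure cosymplectic and contradicting the hypothesis.

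For the global statement I would observe that the metric $\bar g=ag+(a^2-a)\eta\otimes\eta$ is bi-Lipschitz equivalent to $g$, since $\eta(X)^2\le g(X,X)$ and $a>0$; hence $\bar g$ is complete whenever $g$ is, while simple connectedness is a purely topological invariant. Consequently, if $(M,g)$ is complete and simply connected then so is $(M,\bar g)$, the global assertion of Theorem~\ref{ncstructure} promotes the decomposition of $(M,\bar g)$ to a global Riemannian product, and Theorem~\ref{d-hom} then delivers the asserted global D-homothetic description of $(M,\phi,\xi,\eta,g)$.
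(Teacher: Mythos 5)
Your proposal is correct and is exactly the paper's argument: the paper proves this theorem simply by combining Theorem~\ref{d-hom} (D-homothety with $a=\frac23$ to a nearly cosymplectic structure) with Theorem~\ref{ncstructure}. Your additional verifications --- that non-cosymplecticity is preserved via $\bar h=h$ and formula \eqref{barF}, and that completeness survives because $\bar g$ is bi-Lipschitz equivalent to $g$ --- are details the paper leaves implicit, and they check out.
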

We also have
\begin{prop}
On an almost nearly cosymplectic manifolds $(M,\phi,\bar\eta,\bar\xi,\bar g)$ the { following} relations hold.
\begin{equation}\label{acn15}
\bar g((\bar\nabla_X\phi)Y,\bar hZ)=\bar\eta(Y)\bar g(\bar h^2X,\phi Z),
\end{equation}
\begin{equation}\label{acn16}-\bar R(\bar\xi, X,Y,Z)=
\bar g((\bar\nabla_X\bar h)Y,Z)=-\bar\eta(Y)\bar
g(\bar h^2X,Z)+\bar\eta(Z)\bar g(\bar h^2X,Y).
\end{equation}
\end{prop}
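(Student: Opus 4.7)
The plan is to transport the nearly cosymplectic identities \eqref{nc15} and \eqref{nc16} across the D-homothetic equivalence of Theorem~\ref{d-hom}. Concretely, I pass to the nearly cosymplectic structure $(\phi,\xi,\eta,g)$ obtained from $(\bar\phi,\bar\xi,\bar\eta,\bar g)$ by the inverse D-homothety (parameter $a=\tfrac32$) and transform the NC relations using: the connection-difference formula \eqref{barF} for $\bar\nabla\phi$; the formula \eqref{d-nabla} of Corollary~\ref{nabla} for $\bar\nabla$ itself; and the crucial identification $\bar h=h$. The algebra is streamlined by $\eta\circ h=0$, $\phi h+h\phi=0$, and the symmetry of $h^2$ together with $[h^2,\phi]=0$; these allow the $d\eta$-terms to be converted, via $d\eta(\phi U,hV)=2g(h^2U,\phi V)$ and $d\eta(X,\phi hZ)=2g(h^2X,\phi Z)$, into the shapes appearing on the right-hand sides of \eqref{acn15}--\eqref{acn16}.

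For \eqref{acn15}, I substitute $Z\mapsto \bar h Z=hZ$ into \eqref{barF}. The third correction term vanishes since $\eta(hZ)=0$, and inserting \eqref{nc15} for the principal term together with the two $d\eta$-conversions above reduces everything to a linear combination of $\eta(Y)g(h^2X,\phi Z)$ and $\eta(X)g(h^2Y,\phi Z)$. A short coefficient count yields $a^2$ for the first and $2a^2-3a$ for the second; at $a=\tfrac32$ the latter vanishes, and the former combines with $\bar\eta=a\eta$ and $\bar g(\bar h^2X,\phi Z)=ag(h^2X,\phi Z)$ (using $\eta(h^2X)=0$) to produce precisely $\bar\eta(Y)\bar g(\bar h^2X,\phi Z)$.

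For \eqref{acn16}, the leftmost equality follows from the standard Killing-field identity: since $\bar\xi$ is Killing by Corollary~\ref{nabla} and $\bar\nabla_X\bar\xi=\bar hX$, one has $(\bar\nabla_X\bar h)Y=\bar R(X,\bar\xi)Y=-\bar R(\bar\xi,X)Y$. For the rightmost equality, I use \eqref{d-nabla}, which at $a=\tfrac32$ gives $\bar\nabla_X Y=\nabla_X Y+\tfrac12[\eta(Y)hX+\eta(X)hY]$, to compute $\bar\nabla_X(hY)-h\bar\nabla_X Y$; the $\tfrac12\eta(X)h^2Y$ contributions cancel and one is left with $(\bar\nabla_X\bar h)Y=(\nabla_X h)Y-\tfrac12\eta(Y)h^2X$. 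Substituting the NC formula \eqref{nc16} and translating $\xi,\eta,g,h^2$ into their barred counterparts (via $\bar\xi=\xi/a$, $\bar\eta=a\eta$, $\bar h^2=h^2$ and $\bar g(\bar h^2X,V)=ag(h^2X,V)$) yields the claimed expression.

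The main obstacle is the careful bookkeeping of signs and numerical coefficients across the D-homothetic transformation: the vanishing of the $\eta(X)g(h^2Y,\phi Z)$ contribution in \eqref{acn15} depends on the numerical identity $2a^2-3a=0$ at $a=\tfrac32$, and the exact cancellation of the $\eta(X)h^2Y$ term in the derivation of \eqref{acn16} depends on the same parameter value. Apart from this, no genuinely new geometric input beyond \eqref{nc15}, \eqref{nc16}, and Theorem~\ref{d-hom} is required.
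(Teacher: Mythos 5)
Your proposal is correct and follows essentially the same route as the paper: transport \eqref{nc15} and \eqref{nc16} through the D-homothety of Theorem~\ref{d-hom} using \eqref{barF}, \eqref{d-nabla}, $\bar h=h$, and the Killing-field curvature identity for the leftmost equality in \eqref{acn16}. Your coefficient bookkeeping (in particular the vanishing of $2a^2-3a$ at $a=\tfrac32$ and the cancellation leaving $(\bar\nabla_X\bar h)Y=(\nabla_Xh)Y-\tfrac12\eta(Y)h^2X$) matches the paper's computation.
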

\begin{proof} We apply Theorem~\ref{d-hom}.  Using   (\ref{anc-nc}) and $\bar h=h$, we have
\begin{equation*}
\bar g((\bar\nabla_X\phi)Y,\bar hZ)=\frac{3}{2}g((\nabla_X\phi)Y, h Z)
+\frac{3}8\Big[2d\eta(\phi Y, hZ)\eta(X)+d\eta(X,\phi hZ)\eta(Y)\Big]
\end{equation*}
 For { this} D-homothetic transformation, and formula (\ref{nc15}), we
obtain
\begin{multline*}
\bar
g((\bar\nabla_X\phi)Y,\bar hZ)=\frac{3}{2}\Big[\eta(Y)g(h^2X,\phi Z)-\eta(X)g(h^2Y,\phi Z)\Big]
+\frac{3}8\Big[4g(h^2Y,\phi Z)\eta(X)+2g(h^2X,\phi Z)\eta(Y)\Big]\\=\frac{9}{4}\eta(Y)g(h^2X,\phi Z)=\bar\eta(Y)\bar g(\bar h^2X,\phi Z)
\end{multline*}
which proves \eqref{acn15}.

To prove the second line we first note that  since $\bar\xi$ is a Killing vector field we have the well known relation  $g((\bar\nabla_X\bar h)Y,Z)=-\bar R(\bar\xi, X,Y,Z)$ (see e.g. \cite{YanoBochner}).
Further, starting from
(\ref{d-nabla}) taken for $a=\frac32$, we obtain
\begin{equation*}
\bar
g((\bar\nabla_X\bar h)Y,Z)=\frac{3}{2}g((\nabla_Xh)Y,Z)+\frac{3}8\Big[d\eta(X,hZ)\eta(Y)
-d\eta(X,hY)\eta(Z)\Big],
\end{equation*}
which, in view of  (\ref{nc16}) and \eqref{ncosh}, takes the form
\begin{multline*} \bar
g((\bar\nabla_X\bar h)Y,Z)=\frac{3}{2}\Big[-\eta(Y)g(h^2X,Z)+\eta(Z)g(h^2X,Y)\Big]
+\frac{3}8\Big[-d\eta(hX,Z)\eta(Y) +d\eta(hX,Y)\eta(Z)\Big]\\
=-\frac{9}{4}\eta(Y)g(h^2X,Z)+\frac{9}{4}\eta(Z)g(h^2X,Y)=-\bar\eta(Y)\bar g(h^2X,Z)+\bar\eta(Z)\bar g(h^2X,Y).
\end{multline*}
This completes the proof.
\end{proof}
\subsection{The curvature of almost nearly cosymplectic manifold}
Let  $(M,\bar\phi,\bar\eta,\bar\xi,\bar g)$ be an almost nearly cosymplectic manifold D-homotetically related with $a=\frac32$ {  to} the nearly cosymplectic manifold  $(M,\phi,\eta,\xi, g)$ in the sense of Theorem~\ref{d-hom}.

Put $a=\frac32$ into \eqref{d-nabla} and use \eqref{ncosh} to get the following relations between the Levi-Civita connections
\begin{equation}\label{con12}
\bar\nabla_XY=\nabla_XY+\frac12\eta(Y)hX+\frac12\eta(X)hY
\end{equation}
For the curvatures, we calculate using \eqref{con12}, \eqref{ancosh}, \eqref{ncosh} and $\bar h=h$  that
\begin{multline}\label{curv}
\bar R(Z,X)Y=\bar\nabla_Z\bar\nabla_XY-\bar\nabla_X\bar\nabla_ZY-\bar\nabla_{[Z,X]}Y\\=R(Z,X)Y+\frac12(\nabla_Z\eta)Y.hX-\frac12(\nabla_X\eta)Y.hZ+\frac12d\eta(Z,X)hY
-\frac12\eta(Z)(\nabla_Xh)Y\\+\frac12\eta(X)(\nabla_Zh)Y+\frac12\eta(Y)\Big[(\nabla_Zh)X-(\nabla_Xh)Z\Big]+\frac14\eta(Y)\eta(Z)h^2X-\frac14\eta(Y)\eta(X)h^2Z.
\end{multline}
Applying  \eqref{nc16} to \eqref{curv}, we get
\begin{multline}\label{curv1}
\bar R(Z,X)Y=
R(Z,X)Y+\frac12g(hZ,Y)hX-\frac12g(hX,Y)hZ+g(hZ,X)hY\\+\frac54\eta(Y)\eta(Z)h^2X-\frac54\eta(Y)\eta(X)h^2Z+\frac12\Big[\eta(X)g(h^2Z,Y)-\eta(Z)g(h^2X,Y)\Big]\xi
\end{multline}
For the Ricci tensors $\overline{Ric}$ and $Ric$, we get
\begin{equation}\label{ric}
\overline{Ric}(X,Y)=Ric(X,Y)+g(h^2X,Y)-\frac54tr(h^2)\eta(X)\eta(Y).
\end{equation}

\section{Almost nearly cosymplectic manifolds in dimension 5}
In view of Theorem~\ref{ancstructure} we restrict our attention {to} dimension five.
Let $(M, \bar\phi, \bar\xi, \bar\eta,\bar g)$ be a 5-dimensional almost nearly
cosymplectic manifold, $(M, \phi, \xi, \eta, g)$ be the D-homothetically corresponding nearly cosymplectic manifold according to Theorem~\ref{d-hom} and
$(M, \tilde\phi, \tilde\xi, \tilde\eta, \tilde g)$ be the Sasaki-Einstein manifold homothetically attached to the nearly cosymplectic structure  $(M, \phi, \xi, \eta, g)$,  i.e. we have
\begin{equation}\label{ndhom}
\begin{split}
\bar\eta=\frac32\eta, \quad \bar\xi=\frac23\xi, \quad \bar\phi=\phi, \quad \bar g=\frac32g+\frac34\eta\otimes\eta, \quad g=\frac23\bar g-\frac29\bar\eta\otimes\bar\eta,\\
\tilde\phi=-\frac1{\lambda}h, \quad\tilde\eta=\lambda\eta=\frac{2\lambda}3\bar\eta, \quad \tilde\xi=\frac1{\lambda}\xi=\frac3{2\lambda}\bar\xi, \quad\tilde g=\lambda^2g=\frac{2\lambda^2}3\bar g-\frac{2\lambda^2}9\bar\eta\otimes\bar\eta
\end{split}
\end{equation}
We recall that an almost contact metric manifold  $(M, \phi, \xi, \eta, g)$ is called $\eta$-Einstein if its Ricci tensor satisfies
$$Ric(X,Y)=ag(X,Y)+b\eta(X)\eta(Y),$$
where $a$ and $b$ are smooth functions on $M$. $\eta$-Einstein metrics
 were introduced and studied by Okumura \cite{Ok}. In particular, he studied the relation between the existence of
$\eta$ -Einstein metrics and certain harmonic forms and {showed} that a K-contact $\eta$-Einstein manifold of dimension $2n+1$ bigger than 3 the functions $a$ and $b$ are constants satisfying $a+b=2n$. Sasakian $\eta$-Einsten spaces are studied in detail in \cite{BGM}.
\begin{prop}\label{cur}
Let $(M, \bar\phi, \bar\xi, \bar\eta,\bar g)$ be a 5-dimensional almost nearly
cosymplectic manifold. Then it is  $\eta$-Einstein and the  Ricci tensor is given by
\begin{equation}
\overline{Ric}(X,Y)=2\lambda^2\bar g(X,Y)+2\lambda^2\bar\eta(X)\bar\eta(Y).
\end{equation}
\end{prop}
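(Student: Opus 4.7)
The plan is to reduce the statement to the nearly cosymplectic case via Theorem~\ref{d-hom}, exploit the Einstein property coming from Theorem~\ref{sasncos}, and then translate everything back to the barred data using the D-homothetic relations \eqref{ndhom}.

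First I would invoke Theorem~\ref{d-hom} to obtain the D-homothetically related nearly cosymplectic structure $(M,\phi,\xi,\eta,g)$ with $a=3/2$. In dimension five we know from Theorem~\ref{sasncos} that this structure is Einstein: indeed it carries an attached Sasaki-Einstein structure $\tilde g=\lambda^2 g$ which is Einstein with $\tilde{Ric}=4\tilde g$, and since the Ricci $(0,2)$ tensor is invariant under a constant rescaling of the metric, this forces $Ric=4\lambda^2 g$. In particular $Scal=20\lambda^2$, consistent with the formula already recorded in Theorem~\ref{sasncos}.

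Next I would plug this into the transition formula \eqref{ric}, which expresses $\overline{Ric}$ in terms of $Ric$, $h^2$ and $\mathrm{tr}(h^2)$. The operator $h^2$ is controlled by \eqref{spech}: we have $h^2=-\lambda^2(I-\eta\otimes\xi)$, hence $g(h^2X,Y)=-\lambda^2\bigl(g(X,Y)-\eta(X)\eta(Y)\bigr)$ and $\mathrm{tr}(h^2)=-4\lambda^2$ (since the eigenvalue $0$ is simple in dimension five). Substituting these together with $Ric=4\lambda^2 g$ into \eqref{ric} yields, after routine collection of terms,
\begin{equation*}
\overline{Ric}(X,Y)=3\lambda^2\, g(X,Y)+6\lambda^2\,\eta(X)\eta(Y).
\end{equation*}

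Finally I would convert this back into the barred structure. Inverting \eqref{ndhom} gives $g=\tfrac{2}{3}\bar g-\tfrac{2}{9}\bar\eta\otimes\bar\eta$ and $\eta=\tfrac{2}{3}\bar\eta$. Substituting these relations into the previous display and regrouping the coefficients of $\bar g$ and $\bar\eta\otimes\bar\eta$ produces exactly $\overline{Ric}=2\lambda^2\bar g+2\lambda^2\bar\eta\otimes\bar\eta$. The only real difficulty is to be careful with the numerical coefficients appearing in each of the three tensor contributions (the $3\lambda^2 g$ term producing both a $\bar g$ and a $\bar\eta\otimes\bar\eta$ piece with opposite sign to the second term), but no conceptual obstruction remains once formula \eqref{ric} and the explicit form of $h^2$ are in hand.
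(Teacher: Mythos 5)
Your argument is correct and is essentially identical to the paper's own proof: both pass to the D-homothetic nearly cosymplectic structure, use $Ric=4\lambda^2 g$ from the attached Sasaki--Einstein structure, substitute $h^2=-\lambda^2(I-\eta\otimes\xi)$ and $\mathrm{tr}(h^2)=-4\lambda^2$ into \eqref{ric} to get $\overline{Ric}=3\lambda^2 g+6\lambda^2\eta\otimes\eta$, and then convert via \eqref{ndhom}. Your extra remark on the scale-invariance of the $(0,2)$ Ricci tensor just makes explicit a step the paper leaves implicit.
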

\begin{proof}
 We get from \eqref{curv1} and \eqref{spech}
\begin{multline}\label{curv5}
\bar R(Z,X)Y=
R(Z,X)Y+\frac12g(hZ,Y)hX-\frac12g(hX,Y)hZ+g(hZ,X)hY\\-\frac{5\lambda^2}4\eta(Y)\eta(Z)X+\frac{5\lambda^2}4\eta(Y)\eta(X)Z-\frac{\lambda^2}2\Big[\eta(X)g(Z,Y)-\eta(Z)g(X,Y)\Big]\xi
\end{multline}
For the Ricci tensors $\overline{Ric}$ and $Ric$, we obtain from \eqref{ric}
\begin{multline}\label{ric1}
\overline{Ric}(X,Y)=Ric(X,Y)+g(h^2X,Y)-\frac54tr(h^2)\eta(X)\eta(Y)\\=Ric(X,Y)-\lambda^2g(X,Y)+6\lambda^2\eta(X)\eta(Y)= 3\lambda^2 g+6\lambda^2\eta(X)\eta(Y)\\=3\lambda^2(\frac23\bar g(X,Y)-\frac29\bar\eta(X)\bar\eta(Y))+6\lambda^2\frac49\bar\eta(X)\bar\eta(Y)=2\lambda^2\bar g(X,Y)+2\lambda^2\bar\eta(X)\bar\eta(Y)
\end{multline}
since $Ric=\frac{Scal}5g=4\lambda^2g$ is an Einstein space according to Theorem~\ref{sasncos}. This completes the proof.
\end{proof}

Let $(\eta,\omega_1,\omega_2,\omega_3)$ be the SU(2) structure induced by the nearly cosymplectic structure determined by \eqref{su2} and satisfying \eqref{sasnk}  \cite{CD}. Since $\bar\phi=\phi$ and $\bar h=h$ we get an SU(2) structure $(\bar\eta,\bar\omega_1,\bar\omega_2,\bar\omega_3, \bar\omega_i=\bar g(\phi_i.,.))$   induced by the almost nearly cosymplectic structure. These two SU(2) structures are related by
\begin{equation}\label{su22}
\bar\eta=\frac32\eta,\quad \bar\omega_i=\frac32\omega_i.
\end{equation}
We obtain from \eqref{sasnk}, Theorem~\ref{sasncos}, \eqref{su22} and Proposition~\ref{cur} the following
\begin{prop}\label{ancdf} An almost nearly cosymplectic structure on a 5-dimensional manifold is
equivalent to an $SU(2)$-structure
$(\bar\eta,\bar\omega_1,\bar\omega_2,\bar\omega_3)$ satisfying
\begin{equation}\label{achom}
d\bar\eta=-2\lambda\bar\omega_3,\quad d\bar\omega_1=2\lambda \bar\eta \wedge \bar\omega_2,\quad d\bar\omega_2=-2\lambda \bar\eta \wedge \bar\omega_1
\end{equation}
for some real number $\lambda\neq0$.
These structures are hypo and the structures $(\bar\eta,\bar\omega_1)$ and $(\bar\eta,\bar\omega_2)$ are almost nearly cosymplectic $\bar\eta$-Einstein structures.
\end{prop}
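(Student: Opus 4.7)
The strategy is to reduce everything to the nearly cosymplectic setting via Theorem~\ref{d-hom} and then invoke the structure equations \eqref{sasnk} established by Cappelletti--Montano and Dileo in Theorem~\ref{sasncos}. Concretely, given an almost nearly cosymplectic structure $(\bar\phi,\bar\xi,\bar\eta,\bar g)$ in dimension five, Theorem~\ref{d-hom} produces a nearly cosymplectic structure $(\phi,\xi,\eta,g)$ related to it by \eqref{ndhom} (with $a=3/2$) and sharing the same tensor $h=\bar h$. In dimension five the eigenvalue condition \eqref{spech} is automatic, so the nearly cosymplectic SU(2)-structure $(\eta,\omega_1,\omega_2,\omega_3)$ of \eqref{su2} is defined and satisfies \eqref{sasnk}. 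Since $\phi_i\circ\eta=0$ for $i=1,2,3$, the identity $\bar g(\phi_iX,Y)=\tfrac32 g(\phi_iX,Y)$ yields $\bar\omega_i=\tfrac32\omega_i$ as in \eqref{su22}.

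First I would derive \eqref{achom} directly from \eqref{sasnk} and \eqref{su22}: applying $d$ and using $\bar\eta=\tfrac32\eta$ gives $d\bar\eta=\tfrac32 d\eta=-3\lambda\omega_3=-2\lambda\bar\omega_3$, and the remaining two identities follow from the same scaling calculation,
\begin{equation*}
d\bar\omega_1=\tfrac32 d\omega_1=\tfrac{9\lambda}{2}\,\eta\wedge\omega_2=2\lambda\,\bar\eta\wedge\bar\omega_2,\qquad d\bar\omega_2=\tfrac32 d\omega_2=-2\lambda\,\bar\eta\wedge\bar\omega_1.
\end{equation*}

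Next I would check the hypo conditions \eqref{hypo}. Applying $d$ to the first equation of \eqref{achom} and using $d^2=0$ gives $d\bar\omega_3=0$. For the other two, the SU(2)-compatibility $\bar\omega_i\wedge\bar\omega_j=\delta_{ij}\bar v$ forces $\bar\omega_3\wedge\bar\omega_i=0$ for $i=1,2$, so
\begin{equation*}
d(\bar\eta\wedge\bar\omega_1)=d\bar\eta\wedge\bar\omega_1-\bar\eta\wedge d\bar\omega_1=-2\lambda\,\bar\omega_3\wedge\bar\omega_1-2\lambda\,\bar\eta\wedge\bar\eta\wedge\bar\omega_2=0,
\end{equation*}
and identically for $\bar\eta\wedge\bar\omega_2$. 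The $\bar\eta$-Einstein claim for $(\bar\eta,\bar\omega_2)=(\bar\phi,\bar\xi,\bar\eta,\bar g)$ is Proposition~\ref{cur}, and since Theorem~\ref{sasncos} provides a second nearly cosymplectic structure $\phi_1=-\tfrac{1}{\lambda}\phi h$ on $(\phi,\xi,\eta,g)$, applying Theorem~\ref{d-hom} with $a=3/2$ to this second structure yields the almost nearly cosymplectic structure corresponding to $(\bar\eta,\bar\omega_1)$, to which Proposition~\ref{cur} then applies verbatim.

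For the converse, given an SU(2)-structure $(\bar\eta,\bar\omega_1,\bar\omega_2,\bar\omega_3)$ satisfying \eqref{achom}, perform the reverse D-homothety with $a=2/3$ to obtain $(\eta,\omega_i)$ with $\eta=\tfrac23\bar\eta$ and $\omega_i=\tfrac23\bar\omega_i$; the constants reshuffle so that \eqref{achom} becomes \eqref{sasnk}, and Theorem~\ref{sasncos} (together with \eqref{su2}) identifies $(\phi_2,\xi,\eta,g)$ as a genuine nearly cosymplectic structure. Theorem~\ref{d-hom} applied in the forward direction then returns an almost nearly cosymplectic structure whose induced SU(2)-data is the original one. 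The only mildly delicate step is ensuring that the constants $\tfrac32$ and $\lambda$ propagate correctly through both scalings—once the scaling $\bar\omega_i=\tfrac32\omega_i$ is fixed, every other identity is forced, and no real obstacle appears.
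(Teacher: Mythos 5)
Your proposal is correct and follows essentially the same route as the paper, which derives the proposition by combining \eqref{sasnk}, Theorem~\ref{sasncos}, the scaling relation \eqref{su22} and Proposition~\ref{cur}; you have simply written out the constant-chasing, the hypo check via $d^2=0$ and $\bar\omega_3\wedge\bar\omega_i=0$, and the application of Proposition~\ref{cur} to both structures, all of which the paper leaves implicit. No gaps.
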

The homothetic $SU(2)$ structure $\eta^*=\lambda^2\bar\eta, \omega_i^*=\lambda^2\bar\omega_i, i=1,2,3$ satisfies \eqref{achom} with $\lambda=1$. The structure $(\eta^*,\omega^*_3)$ is a Sasaki $\eta^*$-Einstein structure while the structures  $(\eta^*,\omega^*_1)$ and  $(\eta^*,\omega^*_2)$ are almost nearly cosymplectic $\eta^*$-Einstein structures.
We have
\begin{cor}
An SU(2) structure  $(\eta,\omega_1,\omega_2,\omega_3)$ on a 5-manifold is Sasaki $\eta$-Einstein if and only if it satisfies the relations
\begin{equation}\label{achoms}
d\eta=-2\omega_3,\quad d\omega_1=2\eta \wedge \omega_2,\quad d\omega_2=-2\eta \wedge \omega_1.
\end{equation}
\end{cor}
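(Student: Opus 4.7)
The plan is to reduce this corollary to the Conti--Salamon characterization \eqref{sasEinstein} of Sasaki--Einstein SU(2) structures via a single D--homothetic rescaling \eqref{dhom}: \eqref{achoms} is formally the ``coefficient $2$'' analogue of the ``coefficient $3$'' relations \eqref{sasEinstein}, and the two should be related by a uniform rescaling of $\eta$ and of the $\omega_i$.

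For the ``if'' direction I would start from an SU(2) structure satisfying \eqref{achoms} and apply the D--homothetic deformation \eqref{dhom} with constant $a=2/3$, which acts on the SU(2) data by $\eta'=\tfrac{2}{3}\eta$, $\omega_i'=\tfrac{2}{3}\omega_i$. A direct exterior-derivative bookkeeping transforms \eqref{achoms} into $d\eta'=-2\omega_3'$, $d\omega_1'=3\eta'\wedge\omega_2'$, $d\omega_2'=-3\eta'\wedge\omega_1'$, i.e.\ the Conti--Salamon Sasaki--Einstein relations \eqref{sasEinstein}; hence the rescaled structure is Sasaki--Einstein. Since D--homothetic deformations preserve the Sasakian property and carry Einstein Sasakian metrics to $\eta$--Einstein Sasakian metrics (Tanno), the original $(\phi_3,\xi,\eta,g)$ is Sasaki $\eta$--Einstein. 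Alternatively, this implication is already implicit in the paragraph preceding the corollary: \eqref{achoms} corresponds to Proposition~\ref{ancdf} with $\lambda=1$, and the attached structure $(\eta^*,\omega_3^*)$ is explicitly identified there as Sasaki $\eta^*$--Einstein.

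The ``only if'' direction I would run symmetrically: any Sasaki $\eta$--Einstein SU(2) structure in dimension five becomes, after a D--homothety with the inverse constant $a=3/2$, a Sasaki--Einstein SU(2) structure, hence satisfies \eqref{sasEinstein} by Conti--Salamon; undoing the rescaling produces \eqref{achoms}.

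The step I expect to require the most care --- and really the only content of the argument --- is pinning down the specific homothety constant $3/2$. The SU(2) normalization $\omega_i\wedge\omega_j=\delta_{ij}v$ together with $v\wedge\eta\neq0$ fixes the relative scale of $\eta$ against the $\omega_i$, so there is a unique D--homothetic constant sending \eqref{sasEinstein} to \eqref{achoms}, and the scaling calculation above identifies it as $a=3/2$; this is exactly why the coefficient in front of $\eta\wedge\omega_{1,2}$ in \eqref{achoms} is $2=3/(3/2)$ and not any other number. Once this is in place, both implications follow immediately from the two lines of exterior-derivative arithmetic together with Proposition~\ref{ancdf} and the Conti--Salamon theorem.
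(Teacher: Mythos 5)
Your argument is essentially the paper's own (implicit) derivation: the corollary is stated without a separate proof, as the special case $\lambda=1$ of Proposition~\ref{ancdf} together with the passage to the attached Sasaki--Einstein structure, and your single D-homothetic rescaling \eqref{dhom} with constant $2/3$ (resp.\ $3/2$) carrying \eqref{achoms} to \eqref{sasEinstein} is exactly that composite map (cf.\ \eqref{su22} and Theorem~\ref{sasncos}). Your scaling arithmetic is correct: under $\eta\mapsto a\eta$, $\omega_i\mapsto a\omega_i$ the relation $d\eta=-2\omega_3$ is scale-invariant while the coefficient $k$ in $d\omega_1=k\,\eta\wedge\omega_2$ transforms as $k\mapsto k/a$, so $a=3/2$ is forced and the ``if'' direction follows from Tanno's theorem that a D-homothety preserves the Sasakian property and carries an Einstein Sasakian metric to an $\eta$-Einstein one.

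The one genuine soft spot --- present in your write-up and equally in the corollary as stated --- is the ``only if'' direction. A D-homothety with the \emph{fixed} constant $3/2$ turns a Sasaki $\eta$-Einstein structure into a Sasaki--Einstein one only when the $\eta$-Einstein constants are $2$ and $2$, i.e.\ $Ric=2(g+\eta\otimes\eta)$, which is the case produced by the forward construction (Proposition~\ref{cur} with $\lambda=1$). For a Sasaki $\eta$-Einstein structure with $Ric=pg+(4-p)\eta\otimes\eta$ and $p\neq 2$, the Einstein-izing D-homothety constant is different, and the coefficient in $d\omega_1=k\,\eta\wedge\omega_2$ is determined by the transverse Einstein constant ($k=(p+2)/2$), so \eqref{achoms} fails. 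Your uniqueness remark shows only that \emph{if} some D-homothety lands on \eqref{sasEinstein} then its constant is $3/2$, not that such a D-homothety exists for every Sasaki $\eta$-Einstein structure; to close the converse one must either read ``Sasaki $\eta$-Einstein'' with the normalized constants $p=q=2$ or restrict to the structures arising from Proposition~\ref{ancdf}.
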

Consider the structures $(\bar\eta,\bar\Omega^t_1,\bar\Omega^t_2,\omega_3)$, where
\begin{equation}\label{circ}
\begin{split}
\bar\Omega^t_1=\cos t\bar\omega_1+\sin t\bar\omega_2, \quad \phi_1^t=-\frac1{\lambda}\phi h\cos t+\phi\sin t\\
\bar\Omega^t_2=-\sin t\bar\omega_1+\cos t\bar\omega_2, \quad \phi_2^t=\frac1{\lambda}\phi h\sin t+\phi\cos t.
\end{split}
\end{equation}
It is easy  to check that these structures are SU(2) structures satisfying \eqref{achom} and  $(\bar\eta,\bar\Omega^t_1)$ and  $(\bar\eta,\bar\Omega^t_2)$ are almost nearly cosymplectic $\bar\eta$-Einstein structures.
\begin{rmrk}\label{rmnk}
Starting with an SU(2) structure  $(\eta,\omega_1,\omega_2,\omega_3)$ on a 5-manifold which  is Sasaki Einstein one obtaines an $(\mathbb R^2-\{0\})$-familly of  nearly cosymplectic structures $(\eta,\Omega^t_1,\Omega^t_2,\omega_3)$ given by
\begin{equation}\label{circnc}
\begin{split}
\Omega^t_1=\cos t\omega_1+\sin t\omega_2, \quad \phi_1^t=-\frac1{\lambda}\phi h\cos t+\phi\sin t\\
\Omega^t_2=-\sin t\omega_1+\cos t\omega_2, \quad \phi_2^t=\frac1{\lambda}\phi h\sin t+\phi\cos t
\end{split}
\end{equation}
which satisfy \eqref{sasnk}.
\end{rmrk}

Applying Theorem~\ref{sasncos}, Theorem~\ref{d-hom} and Proposition~\ref{cur}, we obtain
\begin{thrm}\label{asasncos}
An almost nearly cosymplectic non cosymplectic 5-manifold  $(M, \bar\phi, \bar\xi, \bar\eta,\bar g)$ carries a D-homothetic Sasaki-Einstein structure $(M, \tilde\phi, \tilde\xi, \tilde\eta, \tilde g)$ and vice versa, any Sasaki-Einstein 5 manifold supports a $(\mathbb R^2-\{0\})$-familly of almost nearly cosymplectic structures.

 In particular, an almost  nearly cosymplectic non cosymplectic 5-manifold is  $\eta$-Einstein, $\overline{Ric}=2\lambda^2(\bar g+\bar\eta\otimes\bar\eta)$  with positive scalar curvature $\overline{Scal}=12\lambda^2$.

In terms of  almost nearly cosymplectic structures, the attached  Sasaki-Einstein structure 
is given by
$$
\tilde\phi=-\frac1{\lambda}h, \quad\tilde\eta=\frac{2\lambda}3\bar\eta, \quad \tilde\xi=\frac3{2\lambda}\bar\xi, \quad\tilde g=\frac{2\lambda^2}3\bar g-\frac{2\lambda^2}9\bar\eta\otimes\bar\eta
$$
The  structures $(-\frac1{\lambda}\phi h,\bar\eta,\bar g)$ and  $(\phi,\bar\eta,\bar g)$ are almost nearly cosymplectic  generating the circle-family  $(\phi_1^t,\bar\eta,\bar g), (\phi_2^t,\bar\eta,\bar g)$ of almost nearly cosymplectic structures  defined by \eqref{circ}.
\end{thrm}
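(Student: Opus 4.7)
The plan is to derive Theorem~\ref{asasncos} as a consolidation of Theorem~\ref{d-hom}, Theorem~\ref{sasncos} and Proposition~\ref{cur}, chaining the D-homothety $a=3/2$ between almost nearly cosymplectic and nearly cosymplectic structures with the Cappelletti-Montano--Dileo correspondence between nearly cosymplectic and Sasaki-Einstein structures in dimension five.

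For the first direction, I would start from an almost nearly cosymplectic non cosymplectic 5-manifold $(M,\bar\phi,\bar\xi,\bar\eta,\bar g)$. By Theorem~\ref{d-hom} the D-homothety with $a=2/3$ produces a nearly cosymplectic non cosymplectic structure $(\phi,\xi,\eta,g)$ with the scalings recorded in \eqref{ndhom}; since $\bar h=h$ the associated tensor inherits \eqref{spech}. Theorem~\ref{sasncos} then attaches to it a Sasaki-Einstein structure with $\tilde\phi=-\tfrac1\lambda h$, $\tilde\xi=\tfrac1\lambda\xi$, $\tilde\eta=\lambda\eta$, $\tilde g=\lambda^2g$. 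Substituting the first-line rescalings of \eqref{ndhom} into these formulas produces the explicit expressions for $(\tilde\phi,\tilde\xi,\tilde\eta,\tilde g)$ in terms of the bar data displayed in the statement.

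For the converse, start from a Sasaki-Einstein 5-manifold. Theorem~\ref{sasncos} produces two nearly cosymplectic structures $(\phi,\xi,\eta,g)$ and $(-\tfrac1\lambda\phi h,\xi,\eta,g)$; Remark~\ref{rmnk} enlarges this to the $(\mathbb R^2-\{0\})$-family with endomorphisms $(\phi_1^t,\phi_2^t)$ as in \eqref{circnc}. Applying Theorem~\ref{d-hom} with $a=3/2$ to each member of this family produces a corresponding family of almost nearly cosymplectic structures; because the rotations in \eqref{circnc} involve only $\phi$ and $h$, and because Theorem~\ref{d-hom} gives $\bar h=h$ together with $\bar\phi=\phi$, the D-homothety commutes with the rotation and the resulting structures are exactly the family $(\phi_1^t,\bar\eta,\bar g)$, $(\phi_2^t,\bar\eta,\bar g)$ of \eqref{circ}. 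The special values of $t$ recover the two distinguished almost nearly cosymplectic structures listed in the statement.

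The remaining $\eta$-Einstein claim is immediate from Proposition~\ref{cur}, which gives $\overline{Ric}=2\lambda^2(\bar g+\bar\eta\otimes\bar\eta)$; the scalar curvature is then obtained by tracing in a $\bar g$-orthonormal frame and using $\bar g(\bar\xi,\bar\xi)=1$, yielding $\overline{Scal}=2\lambda^2(5+1)=12\lambda^2$. I do not expect a genuine obstacle: each ingredient has already been set up, and the theorem is essentially a bookkeeping result. The only care required is in tracking the rescalings of $\phi$, $h$, $\eta$ and $g$ when Theorems~\ref{d-hom} and \ref{sasncos} are composed, so that the explicit formulas in the statement match on the nose; this amounts to the chain of identities collected in \eqref{ndhom}.
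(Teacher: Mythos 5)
Your proposal is correct and follows essentially the same route as the paper, which also obtains this theorem by chaining Theorem~\ref{d-hom} (the $a=\tfrac32$ D-homothety), Theorem~\ref{sasncos} (the Cappelletti-Montano--Dileo correspondence), Remark~\ref{rmnk} for the $(\mathbb R^2-\{0\})$-family, and Proposition~\ref{cur} for the $\eta$-Einstein statement. Your added bookkeeping (the rescaling chain of \eqref{ndhom} and the trace giving $\overline{Scal}=2\lambda^2(5+1)=12\lambda^2$) is exactly what the paper leaves implicit.
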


Since the nearly cosymplectic structure is homothetic to the Sasaki-Einstein structure then the corresponding Levi-Civita connections coincide, $\nabla=\tilde\nabla$ and \eqref{con12} takes the form
\begin{equation}\label{aconsas}
\bar\nabla_XY=\tilde\nabla_XY+\frac12\eta(Y)hX+\frac12\eta(X)hY,\quad R(X,Y)Z=\tilde R(X,Y)Z.
\end{equation}
We have
\begin{prop}
The curvature of { an} almost nearly cosymplectic manifold is connected with the curvature of the associated Sasaki-Einsten manifold by
\begin{multline}\label{curv5s}
\bar R(Z,X)Y=
\tilde R(Z,X)Y+\frac12\tilde g(\tilde\phi Z,Y)\tilde\phi X-\frac12\tilde g(\tilde\phi X,Y)\tilde\phi Z+\tilde g(\tilde Z,X)\tilde\phi Y\\-\frac54\tilde\eta(Y)\tilde\eta(Z)X+\frac54\tilde\eta(Y)\tilde\eta(X)Z-\frac12\Big[\tilde\eta(X)\tilde g(Z,Y)-\tilde\eta(Z)\tilde g(X,Y)\Big]\tilde\xi.
\end{multline}
In particular, we have
\begin{equation}\label{rxi}\bar R(Z,X)\bar\xi=\lambda^2\Big[\bar\eta(X)Z-\bar\eta(Z)X\Big].\end{equation}
\end{prop}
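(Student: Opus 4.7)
The plan is to prove \eqref{curv5s} by directly translating the earlier intermediate formula \eqref{curv5} from the nearly cosymplectic structure $(\phi,\xi,\eta,g)$ to the D-homothetically associated Sasaki-Einstein structure $(\tilde\phi,\tilde\xi,\tilde\eta,\tilde g)$, and then to deduce \eqref{rxi} by specializing to $Y=\bar\xi$ and invoking the standard Sasakian curvature identity. The formula \eqref{curv5} is available as the output of Proposition~\ref{cur}'s computation and requires no further derivation.

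The key observation is that the associated nearly cosymplectic and Sasaki-Einstein structures on $M$ differ only by a constant rescaling, $\tilde g=\lambda^2 g$, so their Levi-Civita connections coincide (the Koszul formula is homogeneous under constant conformal rescaling) and therefore the $(1,3)$-curvature operators agree: $\tilde R(X,Y)Z=R(X,Y)Z$, which is exactly \eqref{aconsas}. From \eqref{ndhom} together with \eqref{su2} one has the pointwise identifications $h=-\lambda\tilde\phi$, $\eta=\tfrac{1}{\lambda}\tilde\eta$, $\xi=\lambda\tilde\xi$, and $g=\tfrac{1}{\lambda^2}\tilde g$. I would then substitute these into \eqref{curv5} term by term. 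The three quadratic $h$-summands combine cleanly: e.g. $\tfrac12 g(hZ,Y)hX=\tfrac12\bigl(-\tfrac{\lambda}{\lambda^2}\tilde g(\tilde\phi Z,Y)\bigr)(-\lambda\tilde\phi X)=\tfrac12\tilde g(\tilde\phi Z,Y)\tilde\phi X$ (the two factors of $\lambda$ from $h=-\lambda\tilde\phi$ cancel against $\lambda^{-2}$ from $g=\tilde g/\lambda^2$). The $\pm\tfrac{5\lambda^2}{4}\eta\otimes\eta$-coefficients immediately become $\pm\tfrac{5}{4}\tilde\eta\otimes\tilde\eta$, and the final summand picks up exactly one $\lambda$ from $\xi=\lambda\tilde\xi$, which, paired with the $\lambda^{-3}$ from the two $g$-factors against the prefactor $\lambda^2$, produces the coefficient $-\tfrac12$ against $\tilde\xi$. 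Collecting all pieces and using $\tilde R=R$ yields \eqref{curv5s}.

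For the second equation I would substitute $Y=\bar\xi=\tfrac23\xi$ directly into the intermediate formula \eqref{curv5}. Since $h\xi=0$ all three quadratic $h$-terms vanish, and the bracket $\bigl[\eta(X)g(Z,\xi)-\eta(Z)g(X,\xi)\bigr]$ also vanishes because $g(\cdot,\xi)=\eta(\cdot)$. What remains is
\begin{equation*}
\bar R(Z,X)\bar\xi=\tfrac23 R(Z,X)\xi+\tfrac{5\lambda^2}{6}\bigl[\eta(X)Z-\eta(Z)X\bigr].
\end{equation*}
To evaluate $R(Z,X)\xi$ I would invoke the standard Sasakian identity $\tilde R(Z,X)\tilde\xi=\tilde\eta(X)Z-\tilde\eta(Z)X$ (which follows from $\tilde\nabla_X\tilde\xi=-\tilde\phi X$), and transport it through $\tilde R=R$, $\xi=\lambda\tilde\xi$, and $\tilde\eta=\lambda\eta$ to obtain $R(Z,X)\xi=\lambda^2\bigl[\eta(X)Z-\eta(Z)X\bigr]$. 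Substituting and combining coefficients gives $\tfrac{3\lambda^2}{2}\bigl[\eta(X)Z-\eta(Z)X\bigr]$, and finally the identity $\eta=\tfrac23\bar\eta$ turns this into \eqref{rxi}.

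The only real obstacle is bookkeeping: one must track the three parallel structures $(\bar\cdot,\cdot,\tilde\cdot)$ and the scaling constants $\tfrac32,\tfrac23,\lambda$ consistently. Once the algebraic identifications $h=-\lambda\tilde\phi$, $\tilde\nabla=\nabla$ and the Sasakian formula for $\tilde R(\cdot,\cdot)\tilde\xi$ are in place, both assertions reduce to substitution in \eqref{curv5}.
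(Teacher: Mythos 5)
Your proposal is correct and follows essentially the same route as the paper: substitute the D-homothety relations \eqref{ndhom} (with $h=-\lambda\tilde\phi$, $\tilde\nabla=\nabla$, hence equality of the $(1,3)$-curvatures) into \eqref{curv5}, then specialize to $Y=\bar\xi$ and invoke the Sasakian identity $R(Z,X)\xi=\eta(X)Z-\eta(Z)X$ to get \eqref{rxi}. Your term-by-term bookkeeping correctly produces $\tilde g(\tilde\phi Z,X)\tilde\phi Y$ for the third summand, which confirms that the $\tilde g(\tilde Z,X)$ appearing in \eqref{curv5s} is a typo.
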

\begin{proof}
Applying \eqref{ndhom} to \eqref{curv5} we obtain \eqref{curv5s}. Using the well known identity
\begin{equation}\label{sascurvxi}
R(X,Y)\xi=\eta(Y)X-\eta(X)Y,
\end{equation}
 valid for any Sasakian manifold (see e.g.\cite{Blair}), we obtain \eqref{rxi} from \eqref{curv5s} and \eqref{ndhom}.
\end{proof}

\section{The NGT-connections in dimension 5} Due to Theorem~\ref{asasncos}, any Sasaki Einstein 5-manifold generates an $(R^2-\{0\})$-family of almost nearly cosymplectic structures which are D-homothetic to the Sasaki-Einstein structure. These almost nearly cosymplectic structures have the same Levi-Civita connection. According to Theorem~\ref{acnika} each almost nearly cosymplectic structure generates a unique NGT-connection with  skew-symmetric torsion. The NGTS connections may be different for  different almost nearly cosymplectic structures from the family because the skew-symmetric torsions may be different. We describe these NGTS connections explicitly and express their curvature and the Ricci tensors in terms of the  Sasaki-Einstein curvature and the generated SU(2)-structures.

According to Theorem~\ref{acnika} and { taking} into account Proposition~\ref{ancdf} and \eqref{circ}, the {torsions} $T_2$ , resp $T_1$, of the NGTS connection $\ne$, resp. $\nd$ corresponding to the almost nearly cosymplectic structure $(\bar\eta,\bar\Omega^t_2)$, resp. $(\bar\eta,\bar\Omega^t_1)$  are given respectively by
\begin{eqnarray}\label{ngtor2}
T_2(X,Y,Z)&=&-\frac13d\bar\Omega^t_2(X,Y,Z)=\frac23\lambda(\bar\eta\wedge\bar\Omega^t_1)(X,Y,Z)\\\nonumber&=&-\frac23\cos t\Big(\bar\eta(X)\bar g(\phi hY,Z)+\bar\eta(Y)\bar g(\phi hZ,X)+\bar\eta(Z)\bar g(\phi hX,Y)\Big)\\\nonumber
&+&\frac23\lambda\sin t\Big(\bar\eta(X)\bar g(\phi Y,Z)+\bar\eta(Y)\bar g(\phi Z,X)+\bar\eta(Z)\bar g(\phi X,Y)\Big);\\
T_1(X,Y,Z)&=&-\frac13d\bar\Omega^t_1(X,Y,Z)=-\frac23\lambda\bar\eta\wedge\bar\Omega^t_2(X,Y,Z)\label{ngtor1}\\\nonumber&=&-\frac23\sin t\Big(\bar\eta(X)\bar g(\phi hY,Z)+\bar\eta(Y)\bar g(\phi hZ,X)+\bar\eta(Z)\bar g(\phi hX,Y)\Big)\\\nonumber &-&\frac23\lambda\cos t\Big(\bar\eta(X)\bar g(\phi Y,Z)+\bar\eta(Y)\bar g(\phi Z,X)+\bar\eta(Z)\bar g(\phi X,Y)\Big);
\end{eqnarray}
Letting $t=t+\frac{\pi}2$ we can write all the torsions with one relation as follows
\begin{eqnarray}\label{ngtorr}
T_t(X,Y,Z)&=&-\frac23\sin t\Big(\bar\eta(X)\bar g(\phi hY,Z)+\bar\eta(Y)\bar g(\phi hZ,X)+\bar\eta(Z)\bar g(\phi hX,Y)\Big)\\\nonumber
&-&\frac23\lambda\cos t\Big(\bar\eta(X)\bar g(\phi Y,Z)+\bar\eta(Y)\bar g(\phi Z,X)+\bar\eta(Z)\bar g(\phi X,Y)\Big);
\end{eqnarray}
\subsection{The NGTS connections $\nt$ and its curvature}
Insert \eqref{ngtorr} into \eqref{ngtcon} to get for the NGTS connections $\nt$ the following expression
\begin{multline}\label{ngtcont}
\bar g(\nt_XY,Z)=\bar g(\bar\nabla_XY,Z)+\frac13\Big[\bar\eta(X)\bar g(hY,Z)+\bar\eta(Y)\bar g(hX,Z)\Big]\\
-\frac13\sin t\Big(\bar\eta(X)\bar g(\phi hY,Z)+\bar\eta(Y)\bar g(\phi hZ,X)+\bar\eta(Z)\bar g(\phi hX,Y)\Big)\\
-\frac13\lambda\cos t\Big(\bar\eta(X)\bar g(\phi Y,Z)+\bar\eta(Y)\bar g(\phi Z,X)+\bar\eta(Z)\bar g(\phi X,Y)\Big)
\end{multline}
which, using \eqref{ndhom} and \eqref{con12} yields
\begin{equation}\label{ngtcontt}
\nt_XY=\nabla_XY-\frac{\lambda}2\Big[\eta(X)(\phi^t_2+2\phi_3)Y-\eta(Y)(\phi^t_2-2\phi_3)X+\frac23 g(\phi^t_2X,Y)\xi\Big].
\end{equation}
The equation \eqref{ngtcontt} implies
\begin{prop}
The NGTS connections $\nt$  with totally skew-symmetric torsion on a Sasaki-Einstein 5-manifold $(M,\e,\phi_3,\g,\tilde\xi)$ are related to the Levi-Civita connection $\tilde\nabla$ of $\g$ by
\begin{equation}\label{ngtconttsas}
\nt_XY=\tilde\nabla_XY-\frac12\Big[\e(X)(\phi^t_2+2\phi_3)Y-\e(Y)(\phi^t_2-2\phi_3)X+\frac23 \g(\phi^t_2X,Y)\tilde\xi\Big].
\end{equation}
\end{prop}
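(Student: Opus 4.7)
The plan is to take the formula \eqref{ngtcontt} already established for $\nt$ in terms of the Levi-Civita connection $\nabla$ of the nearly cosymplectic metric $g$, and simply translate it into the Sasaki-Einstein picture using the homothety relations \eqref{ndhom}. No new geometric input is needed beyond what has already been proved in Proposition~\ref{ancdf}, Theorem~\ref{asasncos} and the identity \eqref{aconsas}.

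First I would invoke \eqref{aconsas}, which records that $(M,\phi,\xi,\eta,g)$ and $(M,\tilde\phi,\tilde\xi,\tilde\eta,\tilde g)$ are homothetic and hence share the same Levi-Civita connection, $\nabla=\tilde\nabla$. Thus the leading term in \eqref{ngtcontt} is already in the form required by \eqref{ngtconttsas}.

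Next I would substitute the scalings from \eqref{ndhom}, namely $\eta=\tfrac1\lambda\tilde\eta$, $\xi=\lambda\tilde\xi$, $g=\tfrac1{\lambda^2}\tilde g$, into the three remaining terms of \eqref{ngtcontt}. The first two terms become
\[
-\tfrac{\lambda}{2}\eta(X)(\phi_2^t+2\phi_3)Y=-\tfrac12\tilde\eta(X)(\phi_2^t+2\phi_3)Y,
\qquad
\tfrac{\lambda}{2}\eta(Y)(\phi_2^t-2\phi_3)X=\tfrac12\tilde\eta(Y)(\phi_2^t-2\phi_3)X,
\]
and the final term picks up a combined factor $\lambda\cdot\tfrac1{\lambda^2}\cdot\lambda=1$, giving
\[
-\tfrac{\lambda}{2}\cdot\tfrac23\,g(\phi_2^tX,Y)\xi=-\tfrac13\,\tilde g(\phi_2^tX,Y)\tilde\xi.
\]
Assembling these yields precisely \eqref{ngtconttsas}.

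There is no real obstacle; the only thing that requires care is tracking the three separate factors of $\lambda$ in the last term (one from $g$, one from $\xi$, one explicit) so that they cancel correctly with the single $\lambda$ multiplying the other two terms. The endomorphisms $\phi_2^t$ and $\phi_3$ themselves are unchanged by the $D$-homothety (they are $(1,1)$-tensors defined directly from $\phi$ and $h$, which Theorem~\ref{d-hom} shows are preserved), so no rewriting on that side is necessary.
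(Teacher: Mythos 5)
Your proposal is correct and is exactly the argument the paper intends: the paper's "proof" consists of the single remark that \eqref{ngtcontt} implies \eqref{ngtconttsas}, which amounts to using $\nabla=\tilde\nabla$ from \eqref{aconsas} and substituting $\eta=\tfrac1\lambda\tilde\eta$, $\xi=\lambda\tilde\xi$, $g=\tfrac1{\lambda^2}\tilde g$ from \eqref{ndhom}, with the factors of $\lambda$ cancelling as you track them. Your bookkeeping of the $\lambda\cdot\lambda^{-2}\cdot\lambda=1$ cancellation in the last term and the invariance of the $(1,1)$-tensors $\phi_2^t$, $\phi_3$ is precisely the content of that step.
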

Further, to calculate the curvature of the NGTS connection $\nt$ we use \eqref{ngtcontt}. We have
\begin{multline}\label{newt2}
\nt_Z\nt_XY=\G_Z\G_XY-\frac{\lambda}2\Big[\eta(Z)[\phi^t_2+2\phi_3]\G_XY-\eta(\G_XY)[\phi^t_2-2\phi_3]Z+\frac23 g(\phi^t_2Z,\G_XY)\xi\Big]
\\-\frac{\lambda}2\Big\{Z\eta(X)\phi^t_2Y+\eta(X)\nt_Z\phi^t_2Y-Z\eta(Y)\phi^t_2X-\eta(Y)\nt_Z\phi^t_2X+\frac23Zg(\phi^t_2X,Y)\xi +\frac23g(\phi^t_2X,Y)\nt_Z\xi\Big\}\\
-\lambda\Big\{Z\eta(X)\phi_3Y+\eta(X)\nt_Z\phi_3Y+Z\eta(Y)\phi_3X+\eta(Y)\nt_Z\phi_3X\Big\}
\end{multline}
Applying  the fact that $\phi^t_2$ and $\phi^t_1$ are nearly cosymplectic structures, we get from \eqref{newt2} and \eqref{ngtcontt} that
\begin{multline}\label{rngt1}
\frac2{\lambda}R^{ngt}_t(Z,X)Y=\frac2{\lambda}R(Z,X)Y-d\eta(Z,X))[\phi^t_2+2\phi_3]Y\\+(\G_Z\eta)Y)[\phi^t_2-2\phi_3]X-(\G_X\eta)Y)[\phi^t_2-2\phi_3]Z\\
-\eta(X)[(\G_Z\phi^t_2)Y+2(\G_Z\phi_3)Y]+\eta(Z)[(\G_X\phi^t_2)Y+2(\G_X\phi_3)Y]\\-2\eta(Y)[(\G_Z\phi_3)X-(\G_X\phi_3)Z-(\G_Z\phi^t_2)X]+\lambda\Big[\eta(Y)\eta(Z)[\frac52-\frac12\phi^t_1]X] -\eta(Y)\eta(X)[\frac52-\frac12\phi^t_1]Z\Big]\\
+\frac{4\lambda}3[g(\phi^t_2X,Y)\phi_3Z-g(\phi^t_2Z,Y)\phi_3X]-\frac{\lambda}3[g(\phi^t_2X,Y)\phi^t_2Z-g(\phi^t_2Z,Y)\phi^t_2X]\\
-\frac23[g(\G_Z\phi^t_2)X,Y)-g(\G_X\phi^t_2)Z,Y)]\xi+\frac{\lambda}3[\eta(X)g(Z,Y)-\eta(Z)g(X,Y)]\xi\\-\frac{2\lambda}3[\eta(X)g(\phi^t_1Z,Y)-\eta(Z)g(\phi^t_1X,Y)+2\eta(Y)g(\phi^t_1Z,X)]\xi\Big]
\end{multline}
which, due to the Killing condition, \eqref{sasnk}, \eqref{circnc} and \eqref{ngtcont1},  is equivalent to
\begin{multline}\label{rngt2}
\frac2{\lambda}R^{ngt}_t(Z,X)Y=\frac2{\lambda}R(Z,X)Y+2\lambda\omega_3(Z,X))[\phi^t_2+2\phi_3]Y\\-\lambda\omega_3(Z,Y)[\phi^t_2-2\phi_3]X+\lambda\omega_3(X,Y)[\phi^t_2-2\phi_3]Z\\
-\eta(X)[(\G_Z\phi^t_2)Y+2(\G_Z\phi_3)Y]+\eta(Z)[(\G_X\phi^t_2)Y+2(\G_X\phi_3)Y]\\-2\eta(Y)[(\G_Z\phi_3)X-(\G_X\phi_3)Z-(\G_Z\phi^t_2)X]+\lambda\Big[\eta(Y)\eta(Z)[\frac52-\frac12\phi^t_1]X] -\eta(Y)\eta(X)[\frac52-\frac12\phi^t_1]Z\Big]\\
+\frac{4\lambda}3[\Omega^t_2(X,Y)\phi_3Z-\Omega^t_2(Z,Y)\phi_3X]-\frac{\lambda}3[\Omega^t_2(X,Y)\phi^t_2Z-\Omega^t_2(Z,Y)\phi^t_2X]\\
+\frac{\lambda}3[\eta(X)g(Z,Y)-\eta(Z)g(X,Y)]\xi-2\lambda[\eta(X)\Omega^t_1(Z,Y)-\eta(Z)\Omega^t_1(X,Y)]\xi\Big]
\end{multline}
At this point we need  \eqref{ncfxx}, Remark~\ref{rmnk} and \eqref{sasnk} yielding to
\begin{eqnarray}\label{ngtcont1}
g((\nabla_X\phi^t_2)Y,Z)&=&\frac13d\Omega^t_2(X,Y,Z)=-\lambda\eta\wedge\Omega^t_1(X,Y,Z)\\\nonumber&=&-\lambda\Big[\eta(X) g(\phi^t_1Y,Z)+\eta(Y) g(\phi^t_1Z,X)+\eta(Z) g(\phi^t_1X,Y)\Big];\\\label{ngtcont2}
g((\nabla_X\phi^t_1)Y,Z)&=&\frac13d\Omega^t_1(X,Y,Z)=\lambda\eta\wedge\Omega^t_2(X,Y,Z)\\\nonumber&=&\lambda\Big[\eta(X) g(\phi^t_2Y,Z)+\eta(Y) g(\phi^t_2Z,X)+\eta(Z) g(\phi^t_2X,Y)\Big]\\\label{ngtcont3}
g((\nabla_X\phi_3)Y,Z)&=&-\frac1{\lambda}g((\nabla_Xh)Y,Z)=\lambda\Big[ g(X,Y)\eta(Z)- g(X,Z)\eta(Y)\Big].
\end{eqnarray}
Applying \eqref{ngtcont1},  \eqref{ngtcont2}and  \eqref{ngtcont3} to \eqref{rngt2},  we obtain
\begin{multline}\label{rngt4}
\frac2{\lambda^2}R^{ngt}_t(Z,X)Y=\frac2{\lambda^2} R(Z,X)Y+2\omega_3(Z,X)\Big[\phi^t_2+2\phi_3\Big]Y\\
+\frac32\eta(X)\eta(Y)\Big[Z+\phi^t_1Z\Big]+\Big[\omega_3(X,Y)-\frac13\Omega^t_2(X,Y)\Big]\phi^t_2Z-2\Big[\omega_3(X,Y)-\frac23\Omega^t_2(X,Y)\Big]\phi_3Z
\\-\frac32\eta(Z)\eta(Y)\Big[X+\phi^t_1X\Big]-\Big[\omega_3(Z,Y)-\frac13\Omega^t_2(Z,Y)\Big]\phi^t_2X+2\Big[\omega_3(Z,Y)-\frac23\Omega^t_2(Z,Y)\Big]\phi_3X\\
+\Big[\frac53\Big(\eta(Z)g(X,Y)-\eta(X)g(Z,Y)\Big)+\eta(Z)\Omega^t_1(X,Y)-\eta(X)\Omega^t_1(Z,Y)-2\eta(Y)\Omega^t_1(Z,X)\Big]\xi
\end{multline}
The equation \eqref{rngt4}, \eqref{sascurvxi} and the Sasaki-Einstein condition $\tilde Ric=4\g$ imply
\begin{prop}
The curvatures $R^{ngt}_t$ of the  NGTS connections   with totally skew-symmetric torsion on a Sasaki-Einstein 5-manifold $(M,\e,\phi_3,\g,\tilde\xi)$ are related to the Sasaki-Einstein curvature $\tilde R$ and the corresponding SU(2) structure $(\tilde\eta,\O^t_1,\O^t_2,\o_3)$ by
\begin{multline}\label{rngt5}
R^{ngt}_t(Z,X)Y=\tilde R(Z,X)Y+\o_3(Z,X)\Big[\phi^t_2+2\phi_3\Big]Y\\
+\frac34\e(X)\e(Y)\Big[Z+\phi^t_1Z\Big]+\frac12\Big[\o_3(X,Y)-\frac13\O^t_2(X,Y)\Big]\phi^t_2Z-\Big[\o_3(X,Y)-\frac23\O^t_2(X,Y)\Big]\phi_3Z
\\-\frac34\e(Z)\e(Y)\Big[X+\phi^t_1X\Big]-\frac12\Big[\o_3(Z,Y)-\frac13\O^t_2(Z,Y)\Big]\phi^t_2X+\Big[\o_3(Z,Y)-\frac23\O^t_2(Z,Y)\Big]\phi_3X\\
+\Big[\frac56\e(Z)\g(X,Y)-\frac56\e(X)\g(Z,Y)+\frac12\e(Z)\O^t_1(X,Y)-\frac12\e(X)\O^t_1(Z,Y)-\e(Y)\O^t_1(Z,X)\Big]\tilde\xi.
\end{multline}
In particular,
\begin{equation}\label{rngtxi}
R^{ngt}_t(Z,X)\tilde\xi=\frac74\e(X)Z-\frac74\e(Z)X+\frac34\e(X)\phi^t_1Z-\frac34\e(Z)\phi^t_1X-\O^t_1(Z,X)\tilde\xi
\end{equation}
The Ricci tensors of the NGTS connections are given by
\begin{equation}\label{ricngt6}
Ric^{ngt}_t(X,Y)
=\frac53\g(X,Y)+\frac{16}3\e(X)\e(Y)+\frac43\O^t_1(X,Y).
\end{equation}
\end{prop}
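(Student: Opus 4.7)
My plan is to take equation \eqref{rngt4}, which has already been derived in the body just above the proposition, as the starting point. That identity expresses $R^{ngt}_t$ in terms of the Levi-Civita curvature $R$ of the nearly cosymplectic metric $g$ together with the nearly cosymplectic SU(2) data $(\eta,\omega_3,\Omega^t_j)$. Since the Sasaki-Einstein metric $\tilde g$ is obtained from $g$ by the ordinary constant homothety $\tilde g=\lambda^2 g$ (see \eqref{ndhom}) and \emph{not} a genuine D-homothety, the two Levi-Civita connections coincide and $R=\tilde R$ as $(1,3)$-tensors. Thus the passage from \eqref{rngt4} to \eqref{rngt5} is a bookkeeping exercise in rescaling.

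Concretely, the homothety \eqref{ndhom} gives $\tilde\eta=\lambda\eta$, $\tilde\omega_i=\lambda^2\omega_i$, $\tilde\Omega^t_j=\lambda^2\Omega^t_j$, and $\tilde\xi=\xi/\lambda$, while the endomorphisms $\phi_3,\phi^t_1,\phi^t_2$ are unaltered. Substituting $g=\tilde g/\lambda^2$, $\eta=\tilde\eta/\lambda$, $\omega_3=\tilde\omega_3/\lambda^2$, $\Omega^t_j=\tilde\Omega^t_j/\lambda^2$ and $\xi=\lambda\tilde\xi$ into \eqref{rngt4} and multiplying through by $\lambda^2/2$ eliminates every overall $\lambda$ and reproduces \eqref{rngt5} term by term.

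For \eqref{rngtxi} I plug $Y=\tilde\xi$ into \eqref{rngt5}. The Sasakian identity \eqref{sascurvxi} yields $\tilde R(Z,X)\tilde\xi=\tilde\eta(X)Z-\tilde\eta(Z)X$; all terms containing $\phi_3\tilde\xi$, $\phi^t_1\tilde\xi$ or $\phi^t_2\tilde\xi$ vanish because $h\tilde\xi=0$ and $\phi\tilde\xi=0$; and every 2-form evaluated at $\tilde\xi$ drops out since $\phi_j$ and $\phi^t_j$ annihilate $\tilde\xi$. Only the terms proportional to $\tilde\eta(X)\tilde\eta(\tilde\xi)[Z+\phi^t_1 Z]$ and $\tilde\eta(Z)\tilde\eta(\tilde\xi)[X+\phi^t_1 X]$, together with the trailing $-\tilde\Omega^t_1(Z,X)\tilde\xi$ from the final bracket, survive; assembling the coefficients $1+3/4=7/4$ and $3/4$ gives \eqref{rngtxi}.

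For \eqref{ricngt6} I contract \eqref{rngt5} on the first slot against a $\tilde g$-orthonormal frame $\{e_i\}$ with $e_5=\tilde\xi$. The leading term contributes $\tilde{\mathrm{Ric}}(X,Y)=4\tilde g(X,Y)$ by the Sasaki-Einstein hypothesis. Terms of the form $\alpha(e_i,X)\cdot V$ with $V$ independent of $e_i$ trace out as $\alpha(V,X)$, and terms of the form (fixed scalar)$\cdot\psi e_i$ contribute $\mathrm{tr}(\psi)$, which vanishes for $\psi\in\{\phi_3,\phi^t_1,\phi^t_2\}$ since these are skew-adjoint. Reducing the non-vanishing contributions requires the SU(2) algebraic identities $\phi_3\phi^t_2=-\phi^t_1$, $\phi^t_2\phi_3=\phi^t_1$ and $\phi_3^2=(\phi^t_j)^2=-I+\eta\otimes\xi$, which follow from $\phi_i\phi_j=-\phi_j\phi_i=\phi_k$. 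The main obstacle here is not conceptual but combinatorial: nine separate contributions must be added and the coefficients of $\tilde g(X,Y)$, $\tilde\eta(X)\tilde\eta(Y)$ and $\tilde\Omega^t_1(X,Y)$ must be checked to collapse to $5/3$, $16/3$ and $4/3$ respectively, matching \eqref{ricngt6}.
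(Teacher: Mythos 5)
Your proposal is correct and follows essentially the same route as the paper, whose entire proof is the one-line remark that \eqref{rngt4}, the Sasakian identity \eqref{sascurvxi}, the coincidence of Levi-Civita connections $\nabla=\tilde\nabla$ from \eqref{aconsas}, and the Einstein condition $\widetilde{Ric}=4\g$ imply the statement. Your write-up merely makes explicit the homothetic rescaling and the trace computation that the paper leaves implicit.
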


{\bf Acknowledgments.} S.I. is partially supported by Contract DH/12/3/12.12.2017 and   Contract 80-10-31/10.04.2019 with the Sofia University
"St.Kl.Ohridski". M.Z. was partially supported by the project
EUROWEB+, and by Serbian Ministry of Education, Science, and
Technological Development, Projects 174012.

\end{document}